\def\h{ {\cal H} }
\def\z{ {\cal Z} }
\def\l{ {\cal L} }
\def\b{ {\cal B} }
\def\u{ {\cal U} }
\def\m{ {\cal M} }
\def\t{ {\cal T} }
\def\s{ {\cal S} }
\def\e{ {\cal E} }
\def\p{ {\cal P} }
\def\k{ {\cal K} }
\def\c{ {\cal C} }
\def\j{ {\cal J} }
\def\T{{\mathbb{T}}}
\def\CC{ {\mathbb{C}} }
\def\RR{ {\mathbb{R}} }
\def\Z{\mathbb{Z}}
\def\la{\lambda}
\def\fde{{\bf \Delta}}
\def\fga{{\bf \Gamma}}
\def\corank{\mathrm{corank}}
\def\BB{ {\bf B} }
\def\noi{\noindent}
\def\D{\mathbb{D}}
\newtheorem{teo}{Theorem}[section]
\newtheorem{prop}[teo]{Proposition}
\newtheorem{lem}[teo]{Lemma}
\newtheorem{coro}[teo]{Corollary}
\theoremstyle{definition}
\newtheorem{rem}[teo]{Remark}
\newtheorem{ejem}[teo]{Example}
\title{Subspaces with or without  a common complement}
\author{Esteban Andruchow and Eduardo Chiumiento}
\begin{document}

\maketitle 

\begin{abstract}
Let $\h$ be a separable complex Hilbert space. Denote by $Gr(\h)$ the Grassmann manifold of $\h$.  We study the following sets of pairs of  elements in $Gr(\h)$:
$$
{\bf \Delta}=\{(\s,\t)\in Gr(\h)\times Gr(\h): \exists \z \in Gr(\h) \hbox{ such that } \s\dot{+}\z=\t\dot{+}\z=\h \},
$$
which are pairs of subspaces that have a common complement, and 
$$
\fga=Gr(\h) \times Gr(\h) \setminus \fde,
$$
which are pairs of subspaces that do not admit a common complement. 
We identify $\s\sim P_\s$, the subspace $\s$ with the orthogonal projection $P_\s$ onto $\s$. Thus we may regard ${\bf \Delta}$ and ${\bf \Gamma}$ as subsets of $\b(\h)\times\b(\h)$ (here $\b(\h)$ denotes the algebra of bounded linear operators in $\h$). We show that ${\bf \Delta}$ is open,  and its connected components are parametrized by the dimension and codimension of the subspaces. The connected component of $\fde$ having both infinite dimensional and co-dimensional subspaces is dense in the corresponding component of $Gr(\h)\times Gr(\h)$.  On the other hand,  ${\bf \Gamma}$ is a (closed) $C^\infty$ submanifold of $\b(\h)\times\b(\h)$, and we characterize the connected components of ${\bf \Gamma}$ in terms of dimensions and semi-Fredholm indices. We study the role played by the geodesic structure of  the Grassmann geometry of $\h$ in the geometry of both ${\bf \Delta}$ and ${\bf \Gamma}$. Several examples of pairs in ${\bf \Delta}$ and the connected components of ${\bf \Gamma}$ are given in Hilbert spaces of functions.
\end{abstract}

\bigskip

{\bf 2020 MSC:}  47B02, 58B10, 58B20,  46E20

{\bf Keywords:} common complement; Grassmann manifold; semi-Fredholm operator; geodesic; Hilbert space of functions.

\section{Introduction}

These notes reflect on the paper \cite{lauzontreil} by M. Lauzon and S. Treil. These authors characterize pairs of (closed) subspaces $\s$ and $\t$ of a  Hilbert space $\h$, which have a common complement $\z$. That is, there exists a closed subspace $\z\subset \h$ such that 
$$
\s\dot{+}\z=\t\dot{+}\z=\h,
$$
where the symbol $\dot{+}$ stands for direct sum. In this paper we investigate  geometric aspects of the set of all the pairs of subspaces having a common complement, and the set of all pairs that do not admit a common complement,

Denote by $\b(\h)$ the algebra of bounded linear operators in a complex separable Hilbert space $\h$,   $\p(\h)$ the subset of orthogonal projections, and $Gr(\h)$ the Grassmann manifold of $\h$.
Throughout, we identify subspaces in $Gr(\h)$  with their corresponding orthogonal projections in $\p(\h)$.
%
Our objects of study will be the complementary sets
\begin{equation*}
\fde:=\{(P_\s,P_\t)\in\p(\h) \times\p(\h): \s \hbox{ and } \t \hbox{ have a common complement}\},
\end{equation*}
and 
\begin{equation*}
\fga:=\{(P_\s,P_\t)\in\p(\h)\times\p(\h): \s \hbox{ and } \t \hbox{ do not have a common complement}\}.
\end{equation*}
The space $\p(\h)$  is a complemented C$^\infty$ submanifold of $\b(\h)$ (see \cite{cpr, pr}). Therefore it is natural to ask about the geometric structure of the sets $\fde$ and $\fga$. By an elementary argument, or using a result by J. Giol \cite{giol}, it can be shown that $\fde$ is an open subset of $\p(\h)\times\p(\h)$. Its complement ${\fga}$ is shown to be a (non complemented) closed $C^\infty$ submanifold of $\p(\h)\times\p(\h)$.  
Thus, both spaces have differentiable structure. We study how the geometry of these spaces relates  to geometry of $\p(\h)$. Specifically, concerning the geodesics of $\p(\h)$, which have been thoroughly studied. We are also interested in the study of concrete examples of pairs of subspaces in $\fde$ and $\fga$ in the context of Hilbert spaces of functions.

Let us describe the contents of this paper. In Section \ref{section 2} we recall preliminaries on common complemented subspaces obtained by M. Lauzon and S. Treil \cite{lauzontreil},  and also the results that were later proved by J. Giol \cite{giol}. We also state basic facts  on  the following aspects of projections: the geometry of $\p(\h)$ \cite{cpr, pr}, Halmos' model for a pair of subspaces \cite{halmos}, and the index of a pair of projections \cite{ass}.

 In Section \ref{section 3} we study the structure of the set ${\bf \Delta}$, and show that ${\bf \Delta}$ is an open subset of $\p(\h) \times \p(\h)$. This is a consequence of a result by Giol \cite{giol}; we give though an elementary proof of this fact. We characterize the connected components of $\fde$ in terms of ranks and co-ranks, and the there is only one connected component $\fde_\infty$ having subspaces of both infinite dimension and codimension. 
 We show by using Halmos' model that  $\fde_\infty$ is dense in the corresponding connected component of the (product) Grassmann manifold. 

In Section \ref{without com compl} we consider the structure of the set ${\bf \Gamma}$. The main result states that ${\bf \Gamma}$ is a (closed) non complemented $C^\infty$ submanifold of $\b(\h)\times\b(\h)$. For  $1\le i,j\le +\infty$, denote by $\p_{i,j}$ the connected  component of $\p(\h)$ consisting of projections with rank $i$ and co-rank $j$. We show that ${\bf \Gamma}$ consists of the following parts:
\begin{itemize}
\item
$\p_{i,j}\times\p_{k,l}$, for $i\ne k$ or $j\ne l$.
\item
For pairs of projections  in $\p_{\infty,\infty}$,
$$
{\bf \Gamma}_1=\{(P_\s,P_\t)\in{\bf \Gamma}: \dim \s\cap\t^\perp<\infty, \   \dim \s^\perp\cap\t<\infty\}.
$$
and
$$
{\bf \Gamma}_\infty=\{(P_\s,P_\t)\in\Gamma: \hbox{ only one of } \s\cap\t^\perp, \   \s^\perp\cap\t \hbox{ has infinite dimension}
\}.
$$
\end{itemize}

Moreover, ${\bf \Gamma}_1$ splits in the following submanifolds, which are its connected components paramatrized by the integers as follows:
$$
{\bf \Gamma}_1^n=\{(P_\s,P_\t)\in{\bf\Gamma}_1: P_\s\big|_\t:\t\to\s \hbox{ is a Fredholm operator of index } n \}, \ n\in\mathbb{Z}. 
$$
The connected components of the submanifold ${\bf \Gamma}_\infty$ are
$$
{\bf\Gamma}_\infty^l:=\{(P_\s,P_\t)\in\p(\h)\times\p(\h): \dim \s\cap\t^\perp<\infty, \dim\s^\perp\cap\t=+\infty\}
$$
and 
$$
{\bf\Gamma}_\infty^r:=\{(P_\s,P_\t)\in\p(\h)\times\p(\h): \dim \s\cap\t^\perp=\infty, \dim\s^\perp\cap\t<+\infty\}.
$$
These results are based on previous results obtained in \cite{pq compacto}.
 
In Section \ref{geodesicas en delta y gamma} we discuss when geodesics of $\p(\h)\times\p(\h)$ remain inside  $\fde$ or $\fga$ along their paths. We give an example which shows that arbitrary close points in $\fde$ may not be joined by a minimizing geodesic of  $\p(\h)\times\p(\h)$. On the other hand,  cases where  geodesics remain in $\fde$ or $\fga$  can be are obtained by imposing conditions related to the ranges or nullspaces, or by taking special kinds of projections such as the ones defined by the restricted Grassmannian (see \cite{carey, PS, SW}).

In Section \ref{examples section} we present several examples of subspaces with or without a common complement related to Hilbert spaces of functions. The existence of geodesics between shift-invariant subspaces of the usual Lebesgue space $L^2(\mathbb{T})$ ($\mathbb{T}$ the unit circle) has been related 
to the injectivity problem for Toeplitz operators (see \cite{grassH2, MP10, polto}). We now examine when several of those examples discussed  in relation to the existence of geodesics   satisfy the more general condition of having a common complement. Also  we consider other examples related to Blaschke products in the Hardy space of the unit disk \cite{ACV21}  or the uncertainty principle \cite{AC19}.  In the case of pairs of subspaces without a common complement, we further determine to which of the above described component of $\fga$ the pair belongs.

\section{ Basic facts}\label{section 2}

 Let $\h$ be a separable complex Hilbert space.  We denote by $\b(\h)$ the algebra of bounded operators acting on $\h$.     For $T \in \b(\h)$, let $R(T)$ and $N(T)$ be the range and nullspace of $T$, respectively. 
 The symbol $\dot{+}$ stands for direct sum of subspaces of $\h$; while we reserve the symbol $\oplus$ for orthogonal sums of subspaces. We write $P_\s\in \b(\h)$ for the  orthogonal projection with range $\s$. Given a  decomposition $\h=\s\dot{+}\z$, we will denote by $P_{\s\parallel\z} \in \b(\h)$ the idempotent with range $\t$ and nullspace $\z$. 
Let us state below several preliminary facts. 



\medskip

\noi \textit{Subspaces with a common complement.} Two closed subspaces $\s, \t \subset \h$ have a \textit{common complement} if there exists a closed subspace $\z\subset \h$ such that $\s\dot{+}\z=\t\dot{+}\z=\h.$

\begin{rem}\label{choreo} 
Let us state the following results from \cite{lauzontreil} by M. Lauzon and S. Treil.

\medskip

\noi $1.$  The subspaces $\s,\t$ have a common complement if and only if there exist a bounded (not necessarily orthogonal) projection $E$ onto one of the subspaces (say $\t$), such  that 
$$
E\big|_\s:\s\to\t
$$
is an isomorphism  (\cite[Prop. 1.3]{lauzontreil}). This result is valid even in the context of Banach spaces.

\medskip

\noi $2.$ Denote by $G=P_\t\big|_\s$, regarded as an operator $G:\s\to\t$ (so that $G^*:\t\to\s$ is $P_\s\big|_\t$). Denote by $\e$ the projection valued spectral measure of $G^*G$.
Note that 
$$
N(G)=\s\cap\t^\perp \ \hbox{ and } \ N(G^*)=\s^\perp\cap\t.
$$
Then $\s$ and $\t$ have a common complement if and only if
\begin{equation}\label{la condicion}
\dim N(G) +\dim \e(0,1-\epsilon)\s=\dim N(G^*)+ \dim \e(0,1-\epsilon)\s
\end{equation}
for some $\epsilon>0$ (equivalently, for all sufficiently small $\epsilon>0$). This characterization also holds  for non separable Hilbert spaces  (\cite[Thm. 0.1]{lauzontreil}).

\medskip

\noi $3.$  As a straightforward consequence of the previous statement, Lauzon and Treil oberved  that  the subspaces $\s$ and $\t$ do not have a common complement in a separable Hilbert space $\h$ if and only if $\dim \s\cap\t^\perp\ne \dim\s^\perp\cap\t$ and the operator $(1-G^*G)\big|_{N(G)^\perp}$ is compact 
 (\cite[Rem. 0.5]{lauzontreil}).
\end{rem}

\begin{rem}\label{Charact Giol}
Later on J. Giol proved the following equivalences (see \cite[Prop. 6.2]{giol}): 
\begin{enumerate}
\item[i)] $\s$ and $\t$ are subspaces with a common complement. 
\item[ii)] There exists $P\in \p(\h)$ such that  $\| P_\s - P \| <1$ and $\| P - P_\t\|<1$. 
\item[iii)]   There exist four idempotents $P_1,P_1',P_2,P_2'$ such that $\s=R(P_1)$, $R(P_1')=R(P_2')$, $R(P_2)=\t$ and $\s^\perp=N(P_1')$, $N(P_1)=N(P_2)$, $N(P_2')=\t^\perp$.
\end{enumerate}
 \end{rem}

\medskip

\noi \textit{The Grassman manifold.} The Grassmann manifold $Gr(\h)$ of $\h$ is defined as the set of all the closed subspaces of $\h$. We identify the Grassmann manifold with the manifold of all orthogonal projections in $\h$ given by 
$$
\p(\h)=\{P \in \b(\h) : P=P^2=P^*\}.
$$
 We observe that this identification can be made more precise: there is a diffeomorpshim between the presentation as subspaces and orthogonal projections (see \cite{AM07}). $\p(\h)$ is a complemented submanifold of $\b(\h)$ with tangent space $(T\p(\h))_P$ at $P\in \p(\h)$ given by
 $$
 (T\p(\h))_P=\{   XP - PX : X=-X^*\in \b(\h)\}. 
  $$
Notice that tangent vectors are co-digonal selfadjoint operators. For each projection $P$, one can define the idempotent $\e_P: \b(\h) \to (T\p)_P$, $\e_P(X)=PX(1-P)+(1-P)XP$. The family $\{ \e_P\}_{P \in \p(\h)}$ induces a linear connection in $\p(\h)$: for $X(t)$ a tangent vector field along a curve $\gamma(t)\in \p(\h)$, set
$$
\frac{DX}{dt}=\e_{\gamma}(\dot{X}).
$$
Let $Gl(\h)$ be the group of all the invertible operators, and $\u(\h)$ the subgroup consisting of unitary operators. Recall that $\u(\h)$ acts on $\p(\h)$ by $U\cdot P_{\s}=P_{U(\s)}=UP_\s U^*$,  $U\in \u(\h)$.  

The geodesics of this connection can be computed (see \cite{pr}, \cite{cpr}): they are induced by one parameter unitary groups, namely, the geodesics starting a $\s$ are of the form:
$$
\delta(t)=e^{itZ} P_\s e^{-itZ},
$$
where $Z^*=Z$ is  co-diagonal with respect to $\s$, meaning that $Z(\s)\subset\s^\perp$ and $Z(\s^\perp)\subset\s$ (i.e., $Z$ has co-diagonal matrix in terms of the decomposition $\h=\s\oplus\s^\perp$). It is useful to pick also $\|Z\|\le \pi/2$, we shall call these  normalized geodesics. It is known (see  \cite{cpr, pr} or also \cite{p-q}) that there exists a geodesic between $\s$ and $\t$ if and only if
\begin{equation}\label{existencia}
\dim \s\cap\t^\perp=\dim \s^\perp\cap\t.
\end{equation}
Moreover, there exists a  unique normalized geodesic joining $\s$ and $\t$ if and only if  $\s\cap\t^\perp=\{0\}= \s^\perp\cap\t$.
 
We shall measure the distance between closed subspaces $\s,\t\subset\h$ by means of the corresponding orthogonal projections and the usual operator norm: 
$$
d(\s,\t):=\|P_\s-P_\t\|.
$$
If $\|P_\s-P_\t\|<1$, then one can find a unitary operator $U=U_{\s,\t}$ which is a $C^\infty$ expression in terms of $P_\s,P_\t$ such that $U\s=\t$. There is more than one way to do this, but if one chooses the geometric argument developed in \cite{pr} and \cite{cpr}, based on the existence of a unique minimal geodesic of the Grassmann manifold, one has
$$
\|U_{\s,\t}-1\|=2 \sin\left( \frac {\arcsin(\|P_\s-P_\t\|)}{2}\right).
$$

\medskip
We remark that the connected components of $\p(\h)$ are parametrized by the rank and the co-rank.
We denote by $\p_{i,j}$ the connected component of $\p(\h)$ consisting of projections with rank $i$ and corank $j$, where the indices satisfy $0 \leq i,j \leq \infty$ and $i + j =\infty$ (usual convention if both are infinite).

\medskip

\noi \textit{Halmos' model for a pair of projections / subspaces.}
Several papers consider the following decomposition of a Hilbert space in the presence of two projections (or subspaces): mainly Dixmier \cite{dixmier}, Halmos \cite{halmos}. We transcribe Halmos' model. Let 
$P$, $Q$ be ortohogonal projections in $\h$, then the following five space decomposition reduces $P$ and $Q$:
\begin{equation}\label{paul h}
\h=R(P)\cap R(Q)\oplus N(P)\cap N(Q)\oplus R(P)\cap N(Q)\oplus N(P)\cap R(Q)\oplus \h_0,
\end{equation}
where $\h_0$ is the orthogonal complement of the sum of the former four, and is called the generic part of $P$ and $Q$. 
The subspace $\h_0$ reduces both $P$ and $Q$, we shall denote the reductions by $P_0:=P\big|_{\h_0}$, $Q_0:=Q\big|_{\h_0}$.

There exists a unitary isomorphism between $\h_0$ and a product space $\l\times\l$, and a positive operator $X$ in $\l$ with $N(X)=\{0\}$ and $\|X\|\le\pi/2$, such that the projections $P$ and $Q$ correspond, under this isomorphism, with the matrices (in $\l\times\l$)
$$
P_0\sim \left(\begin{array}{cc} 1 & 0 \\ 0 & 0 \end{array}\right) \ \hbox{ and } \ Q_0\sim \left(\begin{array}{cc} C^2 & CS \\ CS & S^2 \end{array}\right).
$$
Here $C=\cos(X)$ and $S=\sin(X)$.

\begin{rem}\label{Halmos Giol}
The subspace $\l$ is determined  by the following decomposition  
$$
R(P)=R(P) \cap R(Q) \oplus R(P) \cap N(Q) \oplus \l .
$$ 
Take $\s=R(P)$, $\t=R(Q)$ in Remark \ref{choreo}, then $N(G)^\perp=\s \cap \t \oplus \l$ and the operator $(1 -G^*G)|_{N(G)^\perp}$ is compact if and only if $S^2=\sin(X)$ is compact in $\l$. Equivalently, $X$ is compact in $\l$. Thus, $(\s, \t) \in \fde$ if and only if $\dim \s^\perp \cap \t= \dim \s \cap \t^\perp$ or $X$ is not compact.
This characterization was observed in \cite[Rem. 4.2]{giol}. 
\end{rem}

\medskip

\noi \textit{Index of a pair of projections.}
The following facts were taken from \cite{ass}.
A pair $(P,Q)$ of orthogonal projections is called a \textit{Fredholm pair} if 
$$
Q\Big|_{R(P)}:R(P)\to R(Q)
$$ 
is a Fredholm operator. Its index is
$$
{\rm index}(P,Q)={\rm index}\left(Q\Big|_{R(P)}:R(P)\to R(Q)\right)=\dim R(P)\cap N(Q)-\dim N(P)\cap R(Q).
$$
Among the various characterizations of Fredholm pairs we transcribe the following. 
\begin{rem}\label{remark para Gamma1}
$(P,Q)$ is a Fredholm pair if and only if 
\begin{enumerate}
\item 
$+1$ and $-1$ are isolated in the spectrum $\sigma(P-Q)$ of $P-Q$.
\item 
$R(P)\cap N(Q)$ and $N(P)\cap R(Q)$ are finite dimensional.
\end{enumerate}
\end{rem}
 Notice that a condition for a pair of projections $(P,Q)$  that is stronger than being a Fredholm pair, which shall appear later on, is that $P-Q$ is compact. It is an easy exercise that this is in fact so.

\section{Structure of the set of pairs of subspaces with a common complement}\label{section 3}

In this section we discuss elementary properties  of $\fde$. 
We begin by determining its  connected components.

\begin{rem}\label{comp fin}
Recall that $\p_{i,j}$ denotes de connected components of $\p(\h)$. If $i=k<\infty$ or $j=l<\infty$, then $\p_{i,j} \times \p_{k,l} \subseteq \fde$. Indeed, take $(P,Q) \in \p_{i,j} \times \p_{k,l}$ with $i=k$. Since $T:=Q|_{R(P)}: R(P) \to R(Q)$ is an operator defined in finite-dimensional spaces, we have $k=\dim N(T) + \dim R(T)=\dim N(T^*) + \dim R(T^*)$. From 
$\dim R(T)=\dim N(T)^\perp=\dim R(T^*)$, it follows that $\dim R(P) \cap N(Q)=\dim N(T)=\dim N(T^*)= \dim R(Q) \cap N(P)$.  Hence  $(P,Q) \in \fde$.  Next we observe that the case  where $j=l < \infty$ follows by using the previous case and the map $\perp : \p(\h) \times \p(\h) \to \p(\h) \times \p(\h)$, $\perp(P,Q)=(P^\perp, Q^\perp)$. This is an isomorphism that preserves connected components, and from  the characterization in Remark \ref{Charact Giol}, one easily sees that $\perp(\fde)=\fde$.

On the other hand, assume now that  $i\neq k$ or $j\neq l$, and take $(P,Q) \in \p_{i,j} \times \p_{k,l}$ with $i\neq k$. Thus, $R(P)$ nad $R(Q)$ cannot be isomorphic,    and according to Remark \ref{choreo}  $i)$, we get $(P,Q) \notin \fde$. One can consider also the case where $(P,Q) \in \p_{i,j} \times \p_{k,l}$ with $j\neq l$, which follows again that  $(P,Q) \notin \fde$ by using the map $\perp$ as above.

From these facts, we obtain that 
$$
\fde_{ij}:=\fde \cap (\p_{i,j} \times \p_{i,j})=\p_{i,j} \times \p_{i,j}, 
$$
whenever $i<\infty$  or $j <\infty$, are the only connected components of $\fde$ with finite dimensional rank or corank.
\end{rem}

To analize the case of $\p_{\infty, \infty}$ we need the following (known) property:

\begin{lem}\label{lema menor que 1}
Let $P,Q$ be orthogonal projections such that $\|P-Q\|<1$, and let $P(t)$ be the unique minimal geodesic of $\p(\h)$ such that $P(0)=P$ and $P(1)=Q$. Then for all $t\in[0,1]$ we have that $\|P-P(t)\|<1$.
\end{lem}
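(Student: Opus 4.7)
The strategy is to reduce to the explicit form of the unique minimal normalized geodesic and exploit monotonicity of $\sin$ on $[0,\pi/2]$.

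First I would observe that the hypothesis $\|P-Q\|<1$ forces $R(P)\cap N(Q)=\{0\}=N(P)\cap R(Q)$: a unit vector $x$ in either intersection would give $(P-Q)x=\pm x$ and hence $\|P-Q\|\ge 1$. By the existence/uniqueness criterion for normalized minimal geodesics recalled in Section \ref{section 2} (cf.\ \eqref{existencia}), the hypothesis thus guarantees that the unique minimal geodesic has the form $P(t)=e^{itZ}P e^{-itZ}$, where $Z=Z^*$ is co-diagonal with respect to $P$ and $\|Z\|\le \pi/2$.

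The main step is to establish the identity
$$
\|P-P(t)\|=\sin(t\|Z\|), \qquad t\in[0,1].
$$
In the decomposition $\h=R(P)\oplus N(P)$ I would write
$$
P=\begin{pmatrix} I & 0 \\ 0 & 0 \end{pmatrix}, \qquad Z=\begin{pmatrix} 0 & B^* \\ B & 0 \end{pmatrix},
$$
so that $Z^{2}$ is block-diagonal and commutes with $P$, whereas $\sin(tZ)$ is co-diagonal and $\cos(tZ)=\mathrm{diag}(\cos(t|B|),\cos(t|B^*|))$ by functional calculus. Expanding $P(t)=(\cos(tZ)+i\sin(tZ))P(\cos(tZ)-i\sin(tZ))$ and computing $(P-P(t))^2$, the off-diagonal blocks cancel and one obtains the block-diagonal form $\mathrm{diag}(\sin^2(t|B|),\sin^2(t|B^*|))$. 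Taking operator norm then gives $\|P-P(t)\|=\sin(t\|B\|)=\sin(t\|Z\|)$, valid because $t\|Z\|\le\pi/2$. (Alternatively, one can invoke the Halmos decomposition of the pair $(P,Q)$ to reduce everything to a direct integral of two-dimensional blocks on which the formula is elementary.)

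With this formula the conclusion is immediate. At $t=1$ we recover $\|P-Q\|=\sin(\|Z\|)<1$, hence $\|Z\|<\pi/2$. For every $t\in[0,1]$ we have $t\|Z\|\le \|Z\|<\pi/2$, and monotonicity of $\sin$ on $[0,\pi/2]$ yields $\|P-P(t)\|=\sin(t\|Z\|)\le\sin(\|Z\|)=\|P-Q\|<1$. The only delicate point is the matrix computation of $(P-P(t))^2$; once the off-diagonal cancellation is checked, everything else is bookkeeping.
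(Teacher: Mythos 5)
Your proof is correct, including the delicate block computation: writing $P(t)-P$ in $2\times 2$ form relative to $\h=R(P)\oplus N(P)$ and squaring does yield $\mathrm{diag}(\sin^2(t|B|),\sin^2(t|B^*|))$, and the spectral-mapping step $\|\sin^2(t|B|)\|=\sin^2(t\|Z\|)$ is legitimate because $\sigma(t|B|)\subset[0,\pi/2]$. However, your route differs from the paper's. The paper never forms block matrices or squares the difference: it uses the fact that a $P$-co-diagonal $X$ \emph{anti-commutes} with the symmetry $2P-1$, so that conjugation collapses to a one-sided product, $2P(t)-1=e^{itX}(2P-1)e^{-itX}=e^{2itX}(2P-1)$; since $2P-1$ is unitary this gives $\|P-P(t)\|=\frac12\|e^{2itX}-1\|$, and functional calculus on the unitary $e^{2itX}$ (whose exponent has spectrum in $(-\pi,\pi)$ because $\|X\|<\pi/2$) finishes the proof in three lines. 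Your computation buys something the paper's statement does not make explicit: the exact formula $\|P-P(t)\|=\sin(t\|Z\|)$, hence the monotonicity $\|P-P(t)\|\le\|P-Q\|$ for $t\in[0,1]$, which is slightly stronger than the stated conclusion (the paper's argument also yields this, since $\frac12\|e^{2itX}-1\|=\sin(t\|X\|)$, but it is not recorded). Conversely, the paper's anti-commutation trick buys brevity and avoids the off-diagonal cancellation check entirely; it is worth internalizing, as the same device ($X$ co-diagonal $\Leftrightarrow$ $X(2P-1)=-(2P-1)X$) recurs throughout this circle of ideas.
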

\begin{proof}
$P(t)$ is of the form $P(t)=e^{itX}Pe^{-itX}$, for $X^*=X$, $P$-co-diagonal with $\|X\|<\pi/2$ (see Section \ref{section 2}). That $X$ is $P$-co-diagonal means that $X$ anti-commutes with $2P-1$. Then 
$$
2P(t)-1=e^{itX}(2P-1)e^{-itX}=e^{i2tX}(2P-1),
$$
and thus 
$$
\|P-P(t)\|=\frac12\|(2P-1)-(2P(t)-1)\|=\|(e^{i2tX}-1)(2P-1)\|=\|e^{i2tX}-1\|<1,
$$
because $\|2tX\|\le 2\|X\|<\pi$, by a standard functional calculus argument. 
\end{proof} 
\begin{teo}
The subset $\fde_\infty$ of $\fde$, consisting of pairs of pojections in $\p_{\infty, \infty}$ with a common complement, is arcwise connected. Therefore, in view of the above remark, $\fde_\infty$ is the connected component of such pairs.
\end{teo}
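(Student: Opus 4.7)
The plan is to show that every pair in $\fde_\infty$ can be connected by a continuous path inside $\fde_\infty$ to a fixed reference pair $(P_0,P_0)$ with $P_0\in\p_{\infty,\infty}$; concatenation then gives a path between any two points of $\fde_\infty$. The proof splits into two steps.

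\textbf{Step 1: Connect $(P,Q)\in\fde_\infty$ to $(P,P)$ inside $\fde_\infty$.} By Giol's characterization (Remark \ref{Charact Giol}) there is $P'\in\p(\h)$ with $\|P-P'\|<1$ and $\|Q-P'\|<1$. Since $\|P-P'\|<1$ forces $P$ and $P'$ to have equal rank and corank, $P'\in\p_{\infty,\infty}$. Concatenate the unique minimal geodesic $\alpha$ of $\p(\h)$ from $Q$ to $P'$ with the unique minimal geodesic $\beta$ from $P'$ to $P$ to obtain a continuous curve $\eta\colon[0,1]\to\p(\h)$ from $Q$ to $P$ passing through $P'$ at $t=1/2$. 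Since $\eta$ is a continuous curve in $\p(\h)$ whose endpoints lie in $\p_{\infty,\infty}$, and $\p_{\infty,\infty}$ is a connected component of $\p(\h)$, we have $\eta(t)\in\p_{\infty,\infty}$ for all $t$. The candidate path is $t\mapsto(P,\eta(t))$.

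The only thing to verify is that $(P,\eta(t))\in\fde$ for every $t$, which follows from Lemma \ref{lema menor que 1} combined with Giol's criterion. On $[0,1/2]$, applying Lemma \ref{lema menor que 1} to $\alpha$ (with $P'$ as one endpoint) gives $\|\eta(t)-P'\|<1$; together with $\|P-P'\|<1$ this exhibits $P'$ as a Giol witness for $(P,\eta(t))$. On $[1/2,1]$, applying the same lemma to $\beta$ yields $\|P-\eta(t)\|<1$ directly, and then $P$ itself serves as a Giol witness for $(P,\eta(t))$. Hence the path stays in $\fde_\infty$.

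\textbf{Step 2: Connect $(P,P)$ to $(P_0,P_0)$ inside $\fde_\infty$.} Since $\p_{\infty,\infty}$ is path-connected, choose a continuous path $\gamma\colon[0,1]\to\p_{\infty,\infty}$ with $\gamma(0)=P$ and $\gamma(1)=P_0$. The diagonal path $t\mapsto(\gamma(t),\gamma(t))$ lies in $\fde_\infty$, because for any projection $R$ the pair $(R,R)$ trivially admits a common complement (any algebraic complement of $R(R)$ works, and such a complement exists in $\p_{\infty,\infty}$).

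The only delicate point is controlling the Giol ``$<1$'' condition along the concatenated geodesic in Step 1, and Lemma \ref{lema menor que 1} is designed precisely for this; all remaining verifications (rank preservation, trivial common complement of $(R,R)$) are routine.
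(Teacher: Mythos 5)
Your proposal is correct and takes essentially the same route as the paper's proof: Giol's criterion (Remark \ref{Charact Giol}) supplies the intermediate projection, Lemma \ref{lema menor que 1} keeps each geodesic segment within distance $<1$ of a fixed Giol witness so that the path $(P,\eta(t))$ stays in $\fde$, and the diagonal pairs are then joined using connectedness of $\p_{\infty,\infty}$ (the paper phrases this last step via a continuous path of unitaries and unitary invariance of $\fde$, which amounts to the same diagonal path). No gaps.
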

\begin{proof}
Let $(P_\s,P_\t)\in\fde_\infty$. We shall prove that there is a continuous path inside $\fde$ connecting $(P_\s,P_\t)$ with $(P_\s, P_\s)$. We proceed in two steps. First we show that there is a continuous path inside $\fde$ connecting $(P_\s, P_\t)$ with a pair $(P_\s, E)$ such that $\|P_\s-E\|<1$. Indeed, by Remark \ref{Charact Giol}  there exists $E\in\p(\h)$ such that $\|P_\s-E\|<1$ and $\|P_\t-E\|<1$. Let $E(t)$ be the minimal geodesic of $\p(\h)$ with $E(0)=E$ and $E(1)=P_\t$. Then the curve $(P_\s, E(t))$ remains inside $\fde$ for $t\in[0,1]$. This follows   again using the result by Giol, for we have the intermediate projection $E$ satisfying $\|P_\s-E\|<1$ and $\|E(t)-E\|<1$ (by Lemma \ref{lema menor que 1}). Next, we find a continuous path inside $\fde$ connecting $(P_\s, E)$ with $(P_\s, P_\s)$. Let $P(t)$ be the minimal geodesic joining $P(0)=P_\s$ and $P(1)=E$. Then the curve $(P_\s, P(t))$ remains inside  $\fde$ for $t\in[0,1]$, since by Lemma \ref{lema menor que 1} we know that $\|P_\s-P(t)\|\le\|P-E\|<1$. 

The proof of the theorem follows showing that any two pairs $(P_\s,P_\s)$ and $(P_{\s'}, P_{\s'})$ with $\s,\s'$ infinite and co-infinite, can be joined by a continuous path inside $\fde$. To this effect, note that if $\s$ and $\t$ have a common complement $\z$ and $U$ is a unitary operator, then $U\s$ and $U\t$ also have a common complement (namely $U\z$), i.e., $(P_\s,P_\t)\in\fde$ implies that $(UP_\s U^*,UP_\t U^*)=(P_{U\s},P_{U\t})\in\fde$.
Then, since $P_\s, P_{\s'}\in\p_{\infty, \infty}$, there exists a continuous path of unitaries $U(t)$ such that $U(0)=1$ and $U(1)\s=\s'$. Then $(U(t)P_\s U^*(t),U(t)P_\s U^*(t))$ is a continuous curve in $\fde$ wich joins $(P_\s,P_\s)$ and $(P_{\s'}, P_{\s'})$. 
\end{proof}

\begin{teo}\label{open delta}
 The set ${\bf \Delta}$ is open in $\p(\h)\times\p(\h)$, and consequently, $\fde$ is a submanifold of $\p(\h) \times \p(\h)$. 
\end{teo}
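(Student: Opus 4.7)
The plan is to deduce openness of $\fde$ from Giol's characterization (Remark \ref{Charact Giol}(ii)), according to which $(P_\s, P_\t) \in \fde$ exactly when there exists $P \in \p(\h)$ with $\|P_\s - P\| < 1$ and $\|P_\t - P\| < 1$. Given $(P_\s, P_\t) \in \fde$ and such a witness $P$, I would set
$$
\epsilon := \tfrac{1}{2}\min\bigl(1 - \|P_\s - P\|,\; 1 - \|P_\t - P\|\bigr) > 0.
$$
For any $(P', Q')$ in the product $\epsilon$-ball around $(P_\s, P_\t)$, the triangle inequality yields $\|P' - P\| < 1$ and $\|Q' - P\| < 1$, so the same $P$ still witnesses $(P', Q') \in \fde$. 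This exhibits an open neighborhood of $(P_\s, P_\t)$ contained in $\fde$.

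For an elementary proof bypassing Giol's result, I would fix a common complement $\z$ of $\s$ and $\t$ and work with the bounded oblique projection $E := P_{\s \parallel \z}$. It suffices to show that $V_\z := \{P \in \p(\h) : R(P) \dot{+} \z = \h\}$ is open around $P_\s$; by symmetry it is open around $P_\t$, and $V_\z \times V_\z$ is an open neighborhood of $(P_\s, P_\t)$ inside $\fde$. The key equivalence is that $\s' \dot{+} \z = \h$ if and only if $E|_{\s'} : \s' \to \s$ is invertible, which is routine to verify: injectivity corresponds to $\s' \cap \z = \{0\}$, while for surjectivity and the decomposition of arbitrary $x \in \h$ one uses $x = E(x) + (I - E)(x) \in \s + \z$. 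To check this for $\s'$ close to $\s$, one invokes the canonical unitary $U = U_{\s, \s'}$ from \cite{pr, cpr}, which satisfies $U(\s) = \s'$ and $\|U - I\| \to 0$ as $\|P_\s - P_{\s'}\| \to 0$ (by the formula recalled in Section \ref{section 2}). Then
$$
E \circ U|_\s = I_\s + E \circ (U - I)|_\s,
$$
which is invertible on $\s$ as soon as $\|E\|\cdot\|U - I\| < 1$, via a Neumann series. Since $U|_\s : \s \to \s'$ is a unitary isomorphism, invertibility transfers to $E|_{\s'} : \s' \to \s$, giving $\s' \dot{+} \z = \h$.

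For the submanifold claim: $\p(\h)$ is a complemented $C^\infty$ submanifold of $\b(\h)$ (see \cite{cpr, pr}), so $\p(\h) \times \p(\h)$ is a $C^\infty$ submanifold of $\b(\h) \times \b(\h)$, and an open subset of a $C^\infty$ submanifold inherits the same structure. The only real obstacle in the elementary argument is the bookkeeping around the varying domain $\s'$, which the unitary identification $U_{\s, \s'}$ resolves cleanly; the threshold $\epsilon$ on $\|P_\s - P_{\s'}\|$ will depend on $\|E\|$, but this dependence is harmless for openness.
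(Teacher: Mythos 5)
Your proposal is correct, and the two halves compare differently with the paper. The Giol-based argument is exactly the paper's first proof (the paper just says the condition in Remark \ref{Charact Giol} $ii)$ is ``clearly open''; you made the $\epsilon$ explicit via the triangle inequality). Your elementary argument, however, takes a genuinely different route from the paper's. The paper perturbs the Lauzon--Treil witness of Remark \ref{choreo} $1.$: starting from a bounded projection $E$ onto $\t$ with $E\big|_\s:\s\to\t$ an isomorphism, it conjugates by the unitary $V=U_{\t,\t'}$ to obtain the projection $VEV^*$ onto $\t'$, and shows via the estimate $\|EV^*U-E\|\le\|E\|\left(\|U-1\|+\|V-1\|\right)$ (involving both unitaries $U=U_{\s,\s'}$ and $V$) that $VEV^*\big|_{\s'}:\s'\to\t'$ is still an isomorphism; the common complement implicitly produced for $(\s',\t')$ is $V(N(E))$, which moves with the pair. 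You instead fix one actual common complement $\z$ and prove that the set $V_\z$ of projections whose range is complemented by $\z$ contains a ball around each of $P_\s$ and $P_\t$, using the oblique projection $P_{\s\parallel\z}$, a single unitary per coordinate, and a Neumann series; the equivalence ``$\s'\dot{+}\z=\h$ iff $P_{\s\parallel\z}\big|_{\s'}$ is invertible'' you verify directly rather than citing Lauzon--Treil. Your version buys a sharper local conclusion: all pairs near $(P_\s,P_\t)$ admit the \emph{same} complement $\z$, so $\fde$ is covered by open product sets of the form $V_\z\times V_\z$, and the two coordinates never interact (no $\|V-U\|$ comparison needed). The paper's version buys economy of means: it runs entirely on the quoted criterion, with nothing re-proved from scratch. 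One small point of bookkeeping: from what you actually establish ($V_\z$ contains balls around $P_\s$ and $P_\t$), the neighborhood you should name is the product of those two balls, not $V_\z\times V_\z$ itself; that said, your argument applied at an arbitrary point of $V_\z$ does show $V_\z$ is open, so the statement as written is recoverable.
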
 
\begin{proof}
The statement follows immediately from Giol's characterization in Remark \ref{Charact Giol} $ii)$, which is clearly an open condition. We also give here an elementary proof based on the first item of Remark \ref{choreo}. Pick $(P_\s,P_\t)\in{\bf \Delta}$. Let $\s'$ be close to $\s$ and $\t'$  close to $\t$. Suppose that $\|P_{\s'}-P_\s\|<1$  and $\|P_{\t'}-P_\t\|<1$. Then there exists a unitary operator $U=U_{\s,\s'}$ (a continuous explicit expression in terms of $P_\s$ and $P_{\s'}$) such that $U\s=\s'$, and $\|U-1\|$ is controlled by $\|P_{\s'}-P_\s\|$. Similarly, there exists a unitary operator $V$ with analogous properties for $\t$ and $\t'$ (e.g., $V\t=\t'$, etc.). Then there exists a projection $E$ onto $\t$ such that $E\big|_\s:\s\to\t$ is an isomorphism. We fix $E$, and we claim that for $\t'$ sufficiently close to $\t$, $EV^*\big|_\s:\s  \to \t$ is also an isomorphism. Indeed, note that 
$$
\|EV^*-E\|=\|E(V^*-1)\|\le \|E\| \|V-1\|.
$$
In particular,  we can restrict both $EV^*$ and $E$ to $\s$, and regard these operators as elements in $\b(\s,\t)$. Since $E$ is an isomorphism by the open mapping theorem there exists a constant $r_E=r_{E,\s,\t}$ (depending  on  $E$ fixed, $\s$ and $\t$) such that for any  $T\in\b(\s,\t)$, $\|T-E\|<r_E$ implies that $T$ is an isomorphism. It follows that if $\t'$ is close enough to $\t$ so that $\|V-1\|<\frac{r_E}{\|E\|}$, then $EV^*\big|_\s:\s\to\t$ is an isomorphism. Since $V$ maps $\t$ onto $\t'$,  $VEV^*\big|_\s:\s\to\t'$ is also an isomorphism. Note that $VEV^*$ is a (not necessarily orthogonal) projection onto $\t'$. We claim that $VEV^*\big|_{\s'}:\s'\to\t'$ is also an isomorphism for $\s'$ close enough to $\s$.
Since $U\s=\s'$ and $V\t=\t'$, this is equivalent to  saying that 
$$
EV^*\big|_{\s'}U^*\big|_\s=EV^*U^*\big|_\s:\s\to\t
$$
is an isomorphism. Note that
$$
\|EV^*U-E\|\le\|E\|\|1-V^*U\|=\|E\|\|V-U\|\le\|E\|(\|V-1\|+\|1-U\|).
$$
It follows that if $\|V-1\|<\frac{r_E}{2\|E\|}$ and $\|U-1\|<\frac{r_E}{2\|E\|}$, then $EV^*U\big|_\s$ is an isomorphism between $\s$ and $\t$. Thus, $\s'$ and $\t'$ have a common complement.
 \end{proof}

As we have observed above the connected components of $\fde_{ij}$ of $\fde$, where $i<\infty$ or $j <\infty$, coincide with the whole corresponding component of $\p(\h) \times \p(\h)$. For the connected component $\fde_\infty$ we have the following.

 \begin{teo}
$\fde_\infty$ is dense in $\p_{\infty, \infty} \times \p_{\infty, \infty}$.  
 \end{teo}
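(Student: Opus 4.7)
The plan is to show that for every $(P_\s,P_\t)\in\p_{\infty,\infty}\times\p_{\infty,\infty}\setminus\fde$ and every $\epsilon>0$, one can find $P_{\t'}$ with $\|P_{\t'}-P_\t\|<\epsilon$ and $(P_\s,P_{\t'})\in\fde$. By the third item of Remark \ref{choreo}, the hypothesis $(P_\s,P_\t)\notin\fde$ means $\dim(\s\cap\t^\perp)\neq\dim(\s^\perp\cap\t)$ and $(1-G^*G)|_{N(G)^\perp}$ is compact; by Remark \ref{Halmos Giol}, the latter amounts to the Halmos angle operator $X$ on $\l$ being compact. My strategy is therefore to perturb $P_\t$ so that the new angle operator is non-compact; by Remark \ref{Halmos Giol} the perturbed pair will then belong to $\fde$. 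I split the argument into two cases according to the dimension of $\l$.

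If $\dim \l=\infty$, then since $X$ is compact with trivial kernel and $\|X\|\leq \pi/2$, the spectral projection $E_X([0,\pi/2-\epsilon])$ has finite corank in $\l$; choose inside its range an orthogonal projection $E$ of infinite rank and set $X'=X+\epsilon E$. Then $X'$ is positive on $\l$ with $N(X')=\{0\}$ and $\|X'\|\leq\pi/2$, and it is not compact, since otherwise $\epsilon E=(X+\epsilon E)-X$ would also be compact. Define $P_{\t'}$ to equal $P_\t$ off $\h_0$ and to be the Halmos matrix with angle $X'$ on $\h_0$; the Lipschitz continuity on bounded sets of the entries $\cos^2 x,\,\cos x\sin x,\,\sin^2 x$ yields $\|P_{\t'}-P_\t\|\leq C\,\epsilon$. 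By Remark \ref{Halmos Giol}, $(P_\s,P_{\t'})\in\fde$.

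If $\dim \l <\infty$, a short dimension count forces $\dim(\s\cap\t)=\dim(\s^\perp\cap\t^\perp)=\infty$. Indeed, using the identities $\dim\s=\dim(\s\cap\t)+\dim(\s\cap\t^\perp)+\dim\l$ and the analogous ones for $\s^\perp,\t,\t^\perp$ (all infinite), if $\s\cap\t$ were finite-dimensional then both $\s\cap\t^\perp$ and $\s^\perp\cap\t$ would be infinite-dimensional, placing $(P_\s,P_\t)$ in $\fde$ by Remark \ref{choreo}; the same reasoning applies to $\s^\perp\cap\t^\perp$. Choose orthonormal sequences $(e_n)_{n\geq 1}\subset \s\cap\t$ and $(f_n)_{n\geq 1}\subset \s^\perp\cap\t^\perp$, set $V_n=\mathrm{span}\{e_n,f_n\}$, and define $P_{\t'}$ to equal $P_\t$ on $(\bigoplus_n V_n)^\perp$ and on each $V_n$ to be the rank-one orthogonal projection onto $\cos\epsilon\, e_n+\sin\epsilon\, f_n$. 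A direct $2\times 2$ calculation shows $\|P_{\t'}-P_\t\|=\sin\epsilon$. Since $P_\s|_{V_n}$ is the projection onto $e_n$ and $P_{\t'}|_{V_n}$ is a rank-one projection onto a different line of $V_n$, each $V_n$ enters the generic part of $(P_\s,P_{\t'})$ and carries Halmos angle exactly $\epsilon$; hence the new angle operator has an $\epsilon\cdot I$ summand on the infinite-dimensional subspace $\overline{\mathrm{span}}\{e_n:n\geq 1\}$, so it is not compact, and $(P_\s,P_{\t'})\in\fde$ by Remark \ref{Halmos Giol}.

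The main subtlety lies in the second case: the dimension count that forces $\s\cap\t$ and $\s^\perp\cap\t^\perp$ to be infinite-dimensional, followed by the verification that the uniform rotation by $\epsilon$ on infinitely many orthogonal $2$-planes genuinely produces a new piece of the generic part on which the angle is the scalar $\epsilon I$; this is precisely what makes the new angle operator non-compact while keeping $\|P_{\t'}-P_\t\|$ arbitrarily small.
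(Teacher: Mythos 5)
Your proof is correct and follows essentially the same route as the paper's: the same two-case split according to whether $\dim\l$ is infinite or finite, a perturbation of the Halmos angle operator that destroys compactness in the first case, and in the second case the same rotation by $\epsilon$ inside $(\s\cap\t)\oplus(\s^\perp\cap\t^\perp)$ after showing both intersections are infinite dimensional (your dimension count here is in fact more direct than the paper's, which passes through the orthocomplemented pair $(P_\s^\perp,P_\t^\perp)$). One small point to tighten in the first case: if $R(E)$ happens to contain eigenvectors of $X$ for the eigenvalue $\pi/2-\epsilon$, then $\pi/2$ is an eigenvalue of $X'=X+\epsilon E$, so $\cos X'$ is not injective and $X'$ is not literally the angle operator of the pair $(P_\s,P_{\t'})$; this is harmless, since the offending eigenspace is finite dimensional --- it only adds equal finite dimensions to $\s\cap(\t')^\perp$ and $\s^\perp\cap\t'$, leaving the genuine angle operator non-compact and the two dimensions still distinct --- and it disappears altogether if you choose $E$ below the spectral projection of $X$ associated with $[0,\pi/2-2\epsilon]$.
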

 \begin{proof}
Pick $(P,Q) \in \p_{\infty, \infty} \setminus \fde_\infty$.  Notice that we can take a unitary conjugation of both projections to prove the statement, so by Halmos' model we may assume   
\begin{align*}
P &= 1 \oplus 0 \oplus 1  \oplus 0 \oplus P_0 ,   \\
Q & =1 \oplus 0 \oplus 0  \oplus 1 \oplus Q_0 ,  
\end{align*}
where we follow the order of the subspaces in \eqref{paul h}. The projections $P_0$, $Q_0$ acting on $ \l \times \l$,  are given by
$$
P_0 = \left(\begin{array}{cc} 1 & 0 \\ 0 & 0 \end{array}\right) \ \hbox{ and } \ Q_0 = \left(\begin{array}{cc} C^2 & CS \\ CS & S^2 \end{array}\right),
$$
where $C=\cos(X)$, $S=\sin(X)$, and $\| X \| \leq \frac{\pi}{2}$. 
Then, $(P,Q) \notin \fde$ implies that $X$ is compact (see Remark \ref{Halmos Giol}). 
We consider two cases.

In the first case, we assume that $\l$ is infinite dimensional. 
Recall that the operators $S=\sin(X)$ and $C=\cos(X)$ are injective in Halmos'model, hence we get that neither $0$ nor $\frac{\pi}{2}$ are eigenvalues of $X$. Therefore, its spectral decomposition must be
$$
X=\sum_{k=1}^\infty \lambda_k P_k,
$$
where $\{ \lambda_k\}_{k \geq 1}$ is a sequence of positive eigenvalues in $(0, \frac{\pi}{2})$ that converges to $0$ and $\{ P_k\}_{k \geq 1}$ is the corresponding sequence of finite-rank spectral projections. For $\epsilon >0$ sufficiently small, the operators of the form
$$
X_\epsilon:=\epsilon 1_\l   +\sum_{k \, : \, k \geq 2 \epsilon} \lambda_ k P_k 
$$
have spectrum contained in $(0, \frac{\pi}{2})$. This implies that $S_\epsilon=\sin(X_\epsilon)$ and $C_\epsilon=\cos(X_\epsilon)$ are positive injective operators, and the projections acting on $\l$ defined by
$$
Q_{\epsilon, 0} = \left(\begin{array}{cc} C_\epsilon^2 & C_\epsilon S_\epsilon \\ C_\epsilon S_\epsilon & S_\epsilon ^2 \end{array}\right)
$$
clearly satisfy $Q_{\epsilon, 0} \to Q_0$ in the operator norm as $\epsilon \to 0$. Next take the projections acting on $\h$ defined by
$$
Q_\epsilon:=1 \oplus 0 \oplus 0  \oplus 1 \oplus Q_{\epsilon,0} .
$$
By using that $S_\epsilon$ and $C_\epsilon$ are injective operators, one can easily verify that $R(P_0) \cap R(Q_{\epsilon, 0})=R(P_0) \cap N(Q_{\epsilon, 0})=\{ 0 \}$. This implies $R(P) \cap R(Q_\epsilon)=R(P) \cap R(Q)$ and $R(P) \cap N(Q_\epsilon)=R(P) \cap N(Q)$, and consequently, we obtain the following decomposition
$$R(P)=R(P) \cap R(Q_\epsilon) \oplus R(P) \cap N(Q_\epsilon) \oplus \l ,$$ 
for $\epsilon \geq 0$ small enough. Consider the operators $G_\epsilon=Q_\epsilon|_{R(P)}$ introduced in Remark \ref{choreo}. Since $N(G_\epsilon)^\perp=R(P)\cap R(Q) \oplus \l$ by our previous computations, it follows that 
$(1-G_\epsilon^* G_\epsilon)|_{N(G_\epsilon)^\perp}$ is not compact because $S_\epsilon$, or equivalently $X_\epsilon$,  are not compact for $\epsilon >0$ sufficiently small. We thus get  $(P,Q_\epsilon) \in \fde$ and $(P,Q_\epsilon) \to (P,Q)$.

In the second case, we assume that $\l$ is finite dimensional. Recall that $\l$ is defined by $R(P)=R(P) \cap R(Q) \oplus R(P) \cap N(Q) \oplus \l$. Since $P, Q \in \p_{\infty, \infty}$, it must be $\dim R(P) \cap R(Q)=\infty$ or 
$\dim R(P) \cap N(Q)=\infty$. But $(P,Q) \in \fga$,  so that $\dim R(P) \cap N(Q) \neq \dim N(P) \cap R(Q)$. Then, using that $\h$ is separable, we may assume $\dim R(P) \cap N(Q) < \infty$ because we can interchange the roles of $P$ and $Q$ if necessary. Hence we obtain 
$\dim R(P) \cap R(Q)=\infty.$

 As we have observed in  Remark \ref{comp fin} the sets $\fga$ and $\fde$ are invariant by taking orthogonal complements . Therefore, $(P^\perp, Q^\perp) \in \fga$. There is a corresponding subspace $\l'$ defined by 
 $N(P)=N(P) \cap N(Q) \oplus N(P) \cap R(Q) \oplus \l'$ for the Halmos' model of $(P^\perp,Q^\perp)$. 
 If $\dim \l'=\infty$, then $(P^\perp, Q^\perp)$ can be approximated by pairs in $\fde$ by the first case we have considered, and consequently, the same holds for $(P,Q)$ by taking orthogonal complements. Thus, we assume $\dim \l' < \infty $. By  the same argument given in the previous paragraph, we  also get $\dim N(P) \cap N(Q)=\infty$.  
 
 We take the decomposition $\h=\k_1 \oplus \k_2 \oplus \k_3$, where $\k_1=R(P) \cap R(Q)$, $\k_2=N(P) \cap N(Q)$ and $\k_3$ is the orthogonal complement of the first two subspaces. Since $\k_1$ and $\k_2$ have both infinite dimension and co-dimension, we can  conjugate by a unitary operator the projections $P$ and $Q$, and write them in terms of  $\h=\k_1 \times \k_1 \times \k_3$ as follows
 $$
 P=\begin{pmatrix}        1   &    0   &  0   \\  0   &   0   &   0   \\  0   &    0  &  P_3   \end{pmatrix}, \, \, \, \, \, 
Q=\begin{pmatrix}        1   &    0   &  0   \\  0   &   0   &   0   \\  0   &    0  &  Q_3   \end{pmatrix},
 $$
 for some projections $P_3$ and $Q_3$. Then, we define the following projections  
 $$
 Q_\epsilon := \begin{pmatrix}        \cos^2(\epsilon)    &    \cos(\epsilon) \sin(\epsilon)   &  0   \\   \cos(\epsilon) \sin(\epsilon)   &   \sin^2(\epsilon)    &   0   \\  0   &    0  &  Q_3
 \end{pmatrix}, \, \, \, \, \epsilon > 0.
 $$
Clearly, we have $Q_\epsilon \to Q$. Take the corresponding     Gramian operators  $G_\epsilon$ for $P$ and $Q_\epsilon$. Since   $ N(G_\epsilon)^\perp=\k_1 \oplus \m$, for some subspace $\m$ of $\k_3$, and $1-\cos^2(\epsilon)=\sin^2(\epsilon)=\sin^2(\epsilon) 1_{\k_1}$ is not compact because $\dim \k_1=\infty$, we find that $(1- PQ_\epsilon P)|_{N(G_\epsilon)^\perp}$ is neither compact. We conclude that  $(P,Q_\epsilon) \in \fde$ for all sufficiently small $\epsilon >0$.
 \end{proof}
 



\section{Pairs of subspaces without a common complement}\label{without com compl}. 

\vspace{-1cm}

\subsection{Geometric structure}

Let us consider now 
$$
{\bf\Gamma}:=\{(P_\s,P_\t)\in\p(\h)\times \p(\h): \s, \t \hbox{ do not have a common complement}\}.
$$

\begin{rem}\label{fin or inf gamma}
According to Remark \ref{comp fin}, we have that
$$
\fga_{ijkl}:=\fga \cap (\p_{i,j} \times \p_{k,l})=\p_{i,j} \times \p_{k,l}, 
$$
whenever $i \neq k$  or $j \neq l$, are the only connected components of $\fga$ with finite dimensional rank or corank. Hence we are left to understand the structure of pairs in $\p_{\infty, \infty} \times \p_{\infty, \infty}$ without a common complement. 
\end{rem}

The facts recalled in Remark \ref{choreo} $3.$ can be used to rewrite 
$$
{\bf\Gamma}=\{(P_\s,P_\t): P_\s P_\t^\perp \hbox{ is compact in } (\s\cap\t^\perp)^\perp \hbox{ and } \dim\s\cap\t^\perp\ne\dim\s^\perp\cap\t\}.
$$
Denote by  $P_0$ and $Q_0$ the reductions of $P_\s$ and $P_\t$ to $\h_0$. We shall see below (in Subsection \ref{other}) that  the condition 
\begin{equation*}\label{condicion 1}
P_\s P_\t^\perp \hbox{ is  compact in } (\s\cap\t^\perp)^\perp,
\end{equation*}
can be replaced by the condition  
\begin{equation*}\label{condicion 2}
A_0:=P_0-Q_0 \hbox{   is compact;}
\end{equation*} 
or by the condition
\begin{equation*}\label{condicion 3}
P_0Q_0^\perp \hbox{ is compact;}
\end{equation*} 
or also by
\begin{equation*}\label{condicion 4}
\hbox{ either } P_\s P_\t^\perp \hbox{  or } P_\t P_\s^\perp \hbox{  is compact.}
\end{equation*}  

Given a $\l$ is a Hilbert space, we denote by $\k(\l)\subset\b(\l)$ the ideal of compact operators in $\l$, and by
$$
\pi_\l:\b(\l)\to \b(\l)/\k(\l):={\bf C}(\l)
$$
the $*$-epimorphism onto the Calkin algebra ${\bf C}(\l)$. For a projection $P$, denote by $r(P)=\dim R(P)\le +\infty$ the rank of $P$.  
In \cite{pq compacto}, the set 
$$
\c=\{(P,Q)\in\p(\h)\times\p(\h): PQ\in\k(\h)\}
$$ 
was studied. 
\begin{rem}
We recall  the basic facts from \cite{pq compacto} needed here:
\noindent

\begin{enumerate}
\item
Let $(P,Q)\in\c$ such that both $P$ and $Q$ have infinite rank. The projections $\pi(P)$ and $\pi(Q)$ are mutually orthogonal, and therefore can be written as $2\times 2$ matrices in terms of $\pi(P)$ as
$$
\pi(P)=\left( \begin{array}{cc} 1 & 0 \\ 0 & 0\end{array} \right) \hbox{ and }\  \pi(Q)=\left( \begin{array}{cc} 0 & 0 \\ 0 & q\end{array} \right) .
$$
Since  both $P$ and $Q$ have infinite rank,  we have (in terms of the above $2\times 2$ matrix description): 
$$
\c_1:=\{(P,Q): q=1\hbox{ in } {\bf C}(R(P)^\perp)\},
$$
and
$$
\c_\infty:=\{(P,Q): q \hbox{ is a proper projection } (\ne 0,1) \hbox{ in } {\bf C}(R(P)^\perp)\}.
$$
The elements $(P,Q)$ in the class $\c_1$ have finite Fredholm index.
\end{enumerate}
\end{rem}

We recall the following results from \cite{pq compacto}.
\begin{itemize}
\item (Theorems 6.5 and 7.6)
The connected components of $\c$ are 
$$
\c_\infty \ \hbox{ and } \  \c_1^n=\{(P,Q)\in\c_1: \hbox{index}(P,Q)=n\}.
$$ 
\item (Theorem 8.5)
the set $\c$ is a $C^\infty$ (non complemented) submanifold of $\b(\h)\times\b(\h)$. Therefore $\c_\infty$ $\c_1$ are sumanifolds of $\b(\h)\times\b(\h)$.
\item (Proposition 6.6)
Let $(P,Q)\in\c$, then $(P,Q)\in\c_1$ if and only if $\dim N(P)\cap N(Q)<\infty$.
\end{itemize}

Then we have the following:
\begin{teo}\label{teo 62}
Let $\s, \t$ be closed subspaces of both infinite dimension and codimension.
The following are equivalent
\begin{enumerate}
\item
$(P_\s,P_\t)\in{\bf\Gamma}$ with  
$$
\dim \s\cap\t^\perp<\infty \ \hbox{ and } \ \dim \s^\perp\cap\t<\infty.
$$
\item 
$(P_\s,P_\t^\perp)\in\c_1^n$ with $n\ne 0$.
\end{enumerate}
\end{teo}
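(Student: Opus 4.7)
The plan is to translate, in both directions, the defining conditions of $\fga$ (restricted to finite intersections) into those of $\c_1^n$ with $n \neq 0$, via the substitution $Q = P_\t^\perp$. A preliminary observation I would record is that, under the hypothesis $\dim \s \cap \t^\perp < \infty$, the condition ``$P_\s P_\t^\perp$ is compact on $(\s \cap \t^\perp)^\perp$'' (from the rewriting of $\fga$ recalled in the excerpt) is equivalent to $P_\s P_\t^\perp \in \k(\h)$, because the restriction of $P_\s P_\t^\perp$ to the finite-dimensional $\s \cap \t^\perp$ is the identity there, hence finite-rank. This identifies the compactness part of the $\fga$ condition with the relation $(P_\s, P_\t^\perp) \in \c$.

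For $(1) \Rightarrow (2)$: starting from $(P_\s, P_\t) \in \fga$ with both intersections finite, the observation above gives $(P_\s, P_\t^\perp) \in \c$. Since $N(P_\s) \cap N(P_\t^\perp) = \s^\perp \cap \t$ is finite-dimensional by hypothesis, the characterization of $\c_1$ from \cite{pq compacto} recalled in the excerpt places the pair in $\c_1$. Its Fredholm index is, up to sign, $\dim R(P_\s) \cap R(P_\t^\perp) - \dim N(P_\s) \cap N(P_\t^\perp) = \dim \s \cap \t^\perp - \dim \s^\perp \cap \t$, which is nonzero precisely by the defining inequality of $\fga$; hence $(P_\s, P_\t^\perp) \in \c_1^n$ with $n \neq 0$.

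For $(2) \Rightarrow (1)$: from $(P_\s, P_\t^\perp) \in \c_1^n$ with $n \neq 0$, membership in $\c$ yields $P_\s P_\t^\perp \in \k(\h)$ and membership in $\c_1$ yields $\dim \s^\perp \cap \t < \infty$. Since $n$ is a nonzero integer equal (up to sign) to $\dim \s \cap \t^\perp - \dim \s^\perp \cap \t$, the dimension $\dim \s \cap \t^\perp$ must likewise be finite and the two dimensions differ. Combined with the compactness of $P_\s P_\t^\perp$, this places $(P_\s, P_\t)$ in $\fga$ with finite intersections.

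The main obstacle I anticipate is the precise identification of the Fredholm index on $\c_1$ defined in \cite{pq compacto} with the formula $\dim R(P) \cap R(Q) - \dim N(P) \cap N(Q)$. This should be natural because on $\c_1$ one has $P + Q - 1 \in \k(\h)$, so the operator $Q^\perp|_{R(P)} : R(P) \to R(Q^\perp)$ is a compact perturbation of the identity-like map, hence Fredholm, with kernel $R(P) \cap R(Q)$ (finite since $PQ \in \k(\h)$) and cokernel $N(P) \cap N(Q)$ (finite by the $\c_1$ condition). Once this identification is in hand — either quoted directly from \cite{pq compacto} or verified by the short argument just sketched — the theorem reduces to the bookkeeping above.
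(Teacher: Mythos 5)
Your proposal is correct and follows essentially the same route as the paper: translate via $Q=P_\t^\perp$, use Proposition 6.6 of \cite{pq compacto} (finiteness of $\dim N(P)\cap N(Q)$) for membership in $\c_1$, observe that compactness of $P_\s P_\t^\perp$ forces $\dim \s\cap\t^\perp<\infty$ since the product acts as the identity on that intersection, and read off $n\ne 0$ as the inequality of the two dimensions in the Lauzon--Treil characterization of $\fga$. Your closing identification of the index on $\c_1$ with the Fredholm-pair index of $(P,Q^\perp)$ is exactly what the paper imports from \cite{pq compacto}, so there is no substantive difference.
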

\begin{proof}
2) $\implies $ 1): By Proposition 6.6 in \cite{pq compacto} (cited above), we have that $(P_\s,P_\t^\perp)\in\c_1$ if and only if
$$
\dim N(P_\s)\cap N(P_\t^\perp)=\dim \s^\perp\cap\t<\infty.
$$
On the other hand, the fact that $P_\s P_\t^\perp$ is compact forces that the intersections of the ranges (where this product acts as the identity) be of finite dimension:
$$
\dim R(P_\s)\cap R(P_\t^\perp)=\dim \s\cap\t^\perp<\infty.
$$
Since the index $n \ne 0$, these numbers are different, and therefore $\s$ and $\t$ do not have a common complement.
1) $\implies$ 2): straightforward, using the above computations.
\end{proof}
Denote by 
\begin{equation*}\label{Gamma1}
{\bf\Gamma}_1=\{(P_\s,P_\t)\in{\bf\Gamma}: (P_\s,P_\t^\perp)\in\c_1\},
\end{equation*}
which due to the above result above coincides with
$$
{\bf \Gamma}_1=\{P_\s,P_\t)\in\Gamma: \dim \s\cap\t^\perp<\infty \hbox{ and } \dim \s^\perp\cap\t<\infty\}.
$$
Moreover, put
\begin{equation*}\label{Gamma1n}
{\bf\Gamma}_1^n=\{(P_\s,P_\t)\in{\bf\Gamma}: (P_\s,P_\t^\perp)\in\c_1^n\}.
\end{equation*}
\begin{coro}
${\bf\Gamma}_1$ is a $C^\infty$ (non complemented) submanifold of $\p(\h)\times\p(\h)$. Its connected components are ${\bf\Gamma}_1^n$, $n\in\mathbb{Z}$.
\end{coro}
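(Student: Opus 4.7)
The plan is to transfer the $C^\infty$ structure of $\c_1$ to ${\bf\Gamma}_1$ via the obvious diffeomorphism of $\p(\h)\times\p(\h)$ given by orthogonal complementation in the second factor. Define $\Phi(P,Q)=(P,1-Q)$; this is an affine involution of $\b(\h)\times\b(\h)$, and restricts to an involutive $C^\infty$ diffeomorphism of $\p(\h)\times\p(\h)$ onto itself. Unpacking the definition of ${\bf\Gamma}_1^n$ together with Theorem \ref{teo 62}, one obtains ${\bf\Gamma}_1^n=\Phi^{-1}(\c_1^n)$ for every $n\ne 0$ (the requirement $(P_\s,P_\t)\in{\bf\Gamma}$ built into the definition of ${\bf\Gamma}_1^n$ being automatic for $n\ne 0$), while ${\bf\Gamma}_1^0=\emptyset$. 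Consequently ${\bf\Gamma}_1=\Phi^{-1}\bigl(\bigcup_{n\ne 0}\c_1^n\bigr)$.

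From the results of \cite{pq compacto} recalled above, $\c$ is a $C^\infty$ non-complemented submanifold of $\b(\h)\times\b(\h)$ whose connected components are $\c_\infty$ and $\{\c_1^n\}_{n\in\Z}$. Hence $\bigcup_{n\ne 0}\c_1^n$ is a union of connected components of $\c$, in particular open and closed in $\c$, and is itself a $C^\infty$ submanifold of $\b(\h)\times\b(\h)$ whose connected components are the individual $\c_1^n$, $n\ne 0$. Since being a complemented submanifold is a pointwise (local) property of the tangent spaces, the non-complementedness of $\c$ is inherited by this open piece.

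Applying the diffeomorphism $\Phi$ then gives that ${\bf\Gamma}_1$ is a $C^\infty$ submanifold of $\p(\h)\times\p(\h)$ whose connected components are exactly the ${\bf\Gamma}_1^n$ with $n\ne 0$. Non-complementedness follows from the fact that $d\Phi$ is a Banach space automorphism of the ambient tangent space, so any topological complement of $T{\bf\Gamma}_1$ in $T(\b(\h)\times\b(\h))$ would transport to a complement of the tangent space of $\bigcup_{n\ne 0}\c_1^n$, contradicting the cited result from \cite{pq compacto}. No substantive obstacle is expected here: the content of the corollary is already carried by Theorem \ref{teo 62} and by the theorems of \cite{pq compacto}, and the present proof is essentially a bookkeeping translation via $\Phi$.
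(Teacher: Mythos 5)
Your proposal is correct and is essentially the paper's own argument: the paper likewise transfers the structure of $\c_1$ and its components $\c_1^n$ to ${\bf\Gamma}_1$ via the global diffeomorphism $\mathfrak{t}(P,Q)=(P,Q^\perp)$, invoking Theorem \ref{teo 62} and the cited results of \cite{pq compacto}. Your write-up is merely more explicit about the bookkeeping (in particular that ${\bf\Gamma}_1^0=\emptyset$, so the nonempty components are those with $n\ne 0$, and that non-complementedness transports through $d\Phi$), which the paper leaves implicit.
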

\begin{proof}
Straightforward, using the previous theorem, and the results from \cite{pq compacto} cited above: the global diffeomorphism $\mathfrak{t}:\p(\h)\times\p(\h)\to\p(\h)\times\p(\h)$,
$$
\mathfrak{t}(P,Q)=(P,Q^\perp)
$$
 maps ${\bf\Gamma}_1^n$ onto $\c_1^n$.
\end{proof}
We have the following elementary corollaries to Theorem \ref{teo 62}:
\begin{coro}
$(P,Q)\in{\bf\Gamma}_1^n$ if and only if $(Q,P)\in{\bf\Gamma}_1^{-n}$
\end{coro}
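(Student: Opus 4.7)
The plan is to directly verify the three defining conditions of ${\bf \Gamma}_1^{-n}$ for the pair $(Q,P)$, starting from $(P,Q)\in{\bf \Gamma}_1^n$. Writing $P=P_\s$ and $Q=P_\t$, Theorem \ref{teo 62} expresses membership in ${\bf \Gamma}_1$ as the three conditions
$$
\dim(\s\cap\t^\perp)<\infty,\quad \dim(\s^\perp\cap\t)<\infty,\quad \text{and } P_\s P_\t^\perp \text{ compact}.
$$
The two dimensional conditions are manifestly symmetric under swapping $\s$ and $\t$. For the compactness, I would invoke the equivalence ``$P_\s P_\t^\perp$ compact $\iff$ $P_\t P_\s^\perp$ compact'' announced at the beginning of Section \ref{without com compl}; alternatively, block-decomposing both operators by Halmos' five-space decomposition \eqref{paul h} shows that on the four intersection pieces the products are either zero or the identity on a finite-dimensional subspace, while on the generic part $\h_0$ each has $2\times 2$ matrix entries polynomial in $C=\cos X$ and $S=\sin X$, and both are compact precisely when $X$ is. This shows that lying in ${\bf \Gamma}_1$ is preserved under $(P,Q)\mapsto(Q,P)$.

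For the integer labeling the component, I would read off from the proof of Theorem \ref{teo 62} that the Fredholm index of a pair $(P_\s,P_\t^\perp)\in\c_1$ is the signed difference of the two finite-dimensional invariants identified there:
$$
n=\mathrm{index}(P_\s,P_\t^\perp)=\dim(\s\cap\t^\perp)-\dim(\s^\perp\cap\t).
$$
Applying the same formula to the pair $(P_\t,P_\s^\perp)$ exchanges the two quantities and yields $\mathrm{index}(P_\t,P_\s^\perp)=-n$, so $(P_\t,P_\s^\perp)\in\c_1^{-n}$, which by definition means $(Q,P)\in{\bf \Gamma}_1^{-n}$. The reverse implication of the ``iff'' is obtained by applying the forward direction to the pair $(Q,P)$ in place of $(P,Q)$.

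The main technical point is to justify the symmetric-in-$\s,\t$ character of the compactness condition and to fix the precise sign convention for the Fredholm index in $\c_1$; once both are granted, the corollary reduces to the bookkeeping above, driven by the symmetry of the formula under $\s\leftrightarrow\t$.
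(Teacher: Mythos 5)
Your proof is correct and takes essentially the same route as the paper's: the paper observes that the characterization of ${\bf\Gamma}_1$ by the two finite-dimension conditions is symmetric under exchanging $\s$ and $\t$, and that the index is antisymmetric under the swap, which is precisely your bookkeeping with $n=\dim(\s\cap\t^\perp)-\dim(\s^\perp\cap\t)$. Your additional Halmos-decomposition check that the compactness condition is symmetric is a detail the paper sidesteps by quoting the symmetric form of ${\bf\Gamma}_1$ directly (symmetry of membership in ${\bf\Gamma}$ being automatic, since having a common complement is a symmetric relation).
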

\begin{proof}
We have seen that
$$
{\bf\Gamma}_1=\{(P_\s,P_\t)\in{\bf\Gamma}: \dim \s\cap\t^\perp<\infty, \   \dim \s^\perp\cap\t<\infty\}.
$$
Clearly these conditions are symmetric, and also 
$$
\hbox{index}(P_\s,P_\t)=-\hbox{index}(P_\t,P_\s).
$$
\end{proof}

Recall from Section \ref{section 2} the definition of Fredholm pairs.
\begin{coro}
$(P_\s,P_\t)\in {\bf\Gamma}_1^n$ if and only if $(P,Q)$ is a Fredholm pair with index $n$.
\end{coro}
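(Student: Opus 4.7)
The plan is to read the equivalence off Theorem \ref{teo 62} and the Fredholm pair characterization in Remark \ref{remark para Gamma1}, with the chain of equivalent conditions in Subsection \ref{other} serving as the bridge. For the forward direction, suppose $(P_\s, P_\t) \in {\bf \Gamma}_1^n$. By definition $(P_\s, P_\t^\perp) \in \c_1^n \subset \c$, so $P_\s P_\t^\perp$ is compact. The equivalences in Subsection \ref{other} promote this to $P_\s - P_\t$ being compact, so the essential spectrum of the self-adjoint operator $P_\s - P_\t$ is $\{0\}$ and in particular $\pm 1$ are isolated in $\sigma(P_\s - P_\t)$. Combined with the finite-dimensionality of $\s \cap \t^\perp$ and $\s^\perp \cap \t$ --- the defining property of ${\bf \Gamma}_1$ given by Theorem \ref{teo 62} --- both clauses of Remark \ref{remark para Gamma1} are met, so $(P_\s, P_\t)$ is a Fredholm pair with index $\dim(\s \cap \t^\perp) - \dim(\s^\perp \cap \t)$.

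To match this Fredholm-pair index with the integer $n$ coming from $\c_1^n$, I would trace the definitions through the diffeomorphism $\mathfrak{t}(P,Q) = (P, Q^\perp)$: the index inherited from \cite{pq compacto} on the pair $(P_\s, P_\t^\perp) \in \c_1^n$ is the difference $\dim(R(P_\s) \cap R(P_\t^\perp)) - \dim(N(P_\s) \cap N(P_\t^\perp))$, which coincides with $\dim(\s \cap \t^\perp) - \dim(\s^\perp \cap \t)$ and hence with the Fredholm-pair index. The converse is then immediate, reading the statement within the ambient set ${\bf \Gamma}$ in which ${\bf \Gamma}_1^n$ lives: a Fredholm pair $(P_\s, P_\t)$ of index $n$ has both intersections finite-dimensional by Remark \ref{remark para Gamma1}, which by Theorem \ref{teo 62} puts it in ${\bf \Gamma}_1$, and the agreement of the two indices selects the component ${\bf \Gamma}_1^n$.

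I expect the main obstacle to be administrative rather than conceptual. One has to (a) extract ``$P_\s P_\t^\perp$ compact $\Rightarrow$ $P_\s - P_\t$ compact'' from the equivalent conditions in Subsection \ref{other}, which is what upgrades membership in $\c$ to the isolation of $\pm 1$ in $\sigma(P_\s - P_\t)$ needed for the Fredholm pair characterization, and (b) verify that the index attached to $\c_1^n$ in \cite{pq compacto} really is the concrete difference $\dim(R(P) \cap R(Q)) - \dim(N(P) \cap N(Q))$ used above. With (a) and (b) in hand, the equivalence collapses to an application of Theorem \ref{teo 62} together with Remark \ref{remark para Gamma1}.
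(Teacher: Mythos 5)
Your argument is correct and rests on the same circle of facts, but the mechanism differs from the paper's. The paper's proof is purely multiplicative and never mentions the difference $P_\s-P_\t$ or its spectrum: it observes that $(P_\s,P_\t)\in{\bf\Gamma}_1$ exactly when both $P_\s P_\t^\perp$ and $P_\t P_\s^\perp$ are compact, i.e. when $1-G^*G$ and $1-GG^*$ are compact for $G=P_\t\big|_\s:\s\to\t$, so that $G$ is invertible modulo compacts, hence Fredholm, with index $\dim\s\cap\t^\perp-\dim\s^\perp\cap\t$. You instead pass through compactness of $P_\s-P_\t$ and the spectral characterization of Fredholm pairs in Remark \ref{remark para Gamma1} (a route the paper itself anticipates in the comment following that remark, where it notes that compactness of $P-Q$ is stronger than being a Fredholm pair), using Theorem \ref{teo 62} as the bridge in both directions. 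One ordering slip: $P_\s P_\t^\perp$ compact alone does \emph{not} give $P_\s-P_\t$ compact — it gives compactness of the generic part $A_0$ together with $\dim\s\cap\t^\perp<\infty$, while $\dim\s^\perp\cap\t$ may still be infinite (any pair in ${\bf\Gamma}_\infty^l$ shows this) — so the finite-dimensionality you import from Theorem \ref{teo 62} is needed \emph{before} the ``promotion'' to a compact difference, not merely to check the second clause of Remark \ref{remark para Gamma1}. Since you do invoke it, nothing breaks.

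Your decision to read the converse inside the ambient set ${\bf\Gamma}$ deserves emphasis, because it is not optional bookkeeping and it is the one point where your write-up is more careful than the paper's own proof. A Fredholm pair of index $n\neq 0$ need not lie in ${\bf\Gamma}$ at all: take the Halmos model with generic-part operator $X=\frac{\pi}{4}1_\l$ on an infinite-dimensional $\l$, together with an $n$-dimensional summand inside $\s\cap\t^\perp$; this pair is Fredholm of index $n$, yet $(1-G^*G)\big|_{N(G)^\perp}$ equals $\frac12 1_\l$ on the generic part, which is not compact, so by Lauzon--Treil the two subspaces \emph{do} have a common complement and the pair lies outside ${\bf\Gamma}$. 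The paper's proof glosses over exactly this: its asserted equivalence between ``both products compact'' and ``$P_\t\big|_\s$ Fredholm'' fails in the backward direction without the restriction to ${\bf\Gamma}$ (equivalently, without compactness of $A_0$). So your explicit ambient-${\bf\Gamma}$ reading is the correct repair of a looseness present in both the statement and the paper's proof.
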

\begin{proof}
Clearly we have that $(P_\s,P_\t)\in{\bf\Gamma}_1$ if and only if $(P_\s,P_\t^\perp)\in\c$ and $(P_\t,P_\s^\perp)\in\c$ are compact, i.e., 
$$
P_\s-P_\s P_\t P_\s \ \hbox{ and } \ P_\t-P_\t P_\s P_\t \hbox{ are compact},
$$
i.e., 
$$
P_\t\big|_\s:\s\to\t 
$$
is a Fredhom operator. Its index is $\dim\s\cap\t^\perp-\dim\s^\perp\cap\t$.
\end{proof}

Let us now consider the complementary part
$$
{\bf\Gamma}_\infty:=\{(P_\s,P_\t)\in\Gamma: \dim \s\cap\t^\perp =+\infty\  \hbox{ or }\  \dim \s^\perp \cap\t=+\infty\}.
$$
Clearly only one of the two dimensions can be infinite in each case, and then this set parts into two disjoint subsets
$$
{\bf\Gamma}_\infty={\bf\Gamma}_\infty^l\cup{\bf\Gamma}_\infty^r,
$$
where
$$
{\bf\Gamma}_\infty^l:=\{(P_\s,P_\t)\in \p(\h)\times \p(\h): \dim \s\cap\t^\perp<\infty, \dim\s^\perp\cap\t=+\infty\}
$$
or equivalently
$$
{\bf\Gamma}_\infty^l=\{(P_\s,P_\t)\in{\bf\Gamma}: P_\s P_\t^\perp \hbox{ compact and } P_\t P_\s^\perp \hbox{ non compact}\},
$$
and 
\begin{align*}
{\bf\Gamma}_\infty^r & :=\{(P_\s,P_\t)\in  \p(\h)\times \p(\h): \dim \s^\perp\cap\t<\infty, \dim\s\cap\t^\perp=+\infty\}\\
&=\{(P_\s,P_\t)\in{\bf\Gamma}: P_\t P_\s^\perp \hbox{ compact and } P_\s P_\t^\perp \hbox{ non compact}\}.
\end{align*}
\begin{coro}
Both sets ${\bf\Gamma}_\infty^\l$, ${\bf\Gamma}_\infty^r$ are $C^\infty$ (non complemented) submanifolds of $\p(\h)\times\p(\h)$, which are diffeomorphic to $\c_\infty$. Namely:
$$
(P_\s,P_\t)\mapsto(P_\s,P_\t^\perp) \hbox{ maps } {\bf\Gamma}_\infty^l \hbox{ onto } \c_\infty
$$
and
$$ 
(P_\s,P_\t)\mapsto(P_\s^\perp,P_\t) \hbox{ maps } {\bf\Gamma}_\infty^l \hbox{ onto } \c_\infty.
$$
\end{coro}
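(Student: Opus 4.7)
The plan is to reduce everything to the previously established manifold structure of $\c_\infty$ from \cite{pq compacto} by transporting it through global diffeomorphisms of $\b(\h)\times\b(\h)$. The maps
$$
\mathfrak{t}_1(P,Q)=(P,Q^\perp),\qquad \mathfrak{t}_2(P,Q)=(P^\perp,Q)
$$
are involutive $C^\infty$ diffeomorphisms of $\b(\h)\times\b(\h)$ which preserve $\p(\h)\times\p(\h)$. Thus it suffices to establish the set-theoretic identities $\mathfrak{t}_1({\bf\Gamma}_\infty^l)=\c_\infty$ and $\mathfrak{t}_2({\bf\Gamma}_\infty^r)=\c_\infty$; the $C^\infty$ submanifold structure (and the lack of topological complement) then transfers from $\c_\infty$ to ${\bf\Gamma}_\infty^l$ and ${\bf\Gamma}_\infty^r$ because a diffeomorphism of the ambient space carries submanifolds to submanifolds and tangent spaces to tangent spaces via a linear isomorphism.

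For the identity $\mathfrak{t}_1({\bf\Gamma}_\infty^l)=\c_\infty$, I would argue both inclusions as follows. If $(P_\s,P_\t)\in{\bf\Gamma}_\infty^l$, then $P_\s P_\t^\perp$ is compact, so $(P_\s,P_\t^\perp)\in\c$. Both $P_\s$ and $P_\t^\perp$ lie in $\p_{\infty,\infty}$ (by the definition of ${\bf\Gamma}_\infty^l$), and
$$
\dim N(P_\s)\cap N(P_\t^\perp)=\dim \s^\perp\cap\t=+\infty,
$$
so by Proposition 6.6 of \cite{pq compacto}, $(P_\s,P_\t^\perp)\notin\c_1$, hence $(P_\s,P_\t^\perp)\in\c_\infty$. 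Conversely, given $(P,Q)\in\c_\infty$, set $P_\s=P$, $P_\t=Q^\perp$. Then $P_\s P_\t^\perp=PQ$ is compact; the identity $\dim R(P)\cap R(Q)<\infty$ (which holds because $PQ$ acts as the identity on this intersection and is compact) translates into $\dim\s\cap\t^\perp<\infty$; and Proposition 6.6 again gives $\dim\s^\perp\cap\t=\dim N(P)\cap N(Q)=+\infty$. These two dimensions disagree, and together with the compactness of $P_\s P_\t^\perp$ this places $(P_\s,P_\t)$ in ${\bf\Gamma}_\infty^l$.

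For ${\bf\Gamma}_\infty^r$ I would run the symmetric argument using $\mathfrak{t}_2$: membership in ${\bf\Gamma}_\infty^r$ says $P_\t P_\s^\perp$ is compact with $\dim\s\cap\t^\perp=+\infty$ and $\dim\s^\perp\cap\t<\infty$, which under $\mathfrak{t}_2$ becomes $(P_\s^\perp,P_\t)\in\c$ with $\dim N(P_\s^\perp)\cap N(P_\t)=\dim\s\cap\t^\perp=+\infty$, i.e.\ $(P_\s^\perp,P_\t)\in\c_\infty$ by Proposition 6.6; the reverse direction is analogous.

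No step is genuinely hard; the only thing to be careful about is the bookkeeping between the two characterizations of $\c_\infty$ (the one via proper projections in ${\bf C}(R(P)^\perp)$ and the one via $\dim N(P)\cap N(Q)=\infty$), and checking that the rank/corank hypotheses match up — in particular that pairs in $\c_\infty$ automatically have both components in $\p_{\infty,\infty}$, which follows since $PQ$ compact combined with finite corank of either factor would force one of $P$ or $Q$ to be of finite rank.
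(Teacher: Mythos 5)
Your proposal is correct and takes essentially the same route as the paper: the paper's entire proof is ``Use Theorem 8.5 of \cite{pq compacto}'', i.e.\ transport the (non complemented) submanifold structure of $\c_\infty$ through the flip diffeomorphisms $(P,Q)\mapsto(P,Q^\perp)$ and $(P,Q)\mapsto(P^\perp,Q)$, exactly as you do. Your explicit verification of the set identities via Proposition 6.6 of \cite{pq compacto} merely spells out what the paper leaves implicit, since it had already recorded the compactness characterizations of ${\bf\Gamma}_\infty^l$ and ${\bf\Gamma}_\infty^r$ just before the corollary.
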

\begin{proof}
Use Theorem 8.5 of \cite{pq compacto}.
\end{proof}
\begin{rem}
The pairs $(P_\s,P_\t)\in{\bf\Gamma}_\infty^l$ are also characterized by the condition that
$P_\s\big|_\t\to\s$ is a semi-Fredholm operator of index $+\infty$. Indeed, 
$P_\s-P_\s P_\t P_\s$ is compact, and thus $P_\s P_\t P_\s$ is a compact perturbation of the identity, as an operator in $\s$; i.e., $P_\t|_\s:\s\to\t$ has a left inverse, but not a right inverse.

Similarly, $(P_\s,P_\t)\in{\bf\Gamma}_\infty^r$ if and only if $P_\t\big|_\s:\s\to\t$ has a right inverse, but not a left inverse. 
\end{rem}

Consequently, we have the following result. 
\begin{coro}
The connected components of ${\bf\Gamma}_\infty$ are ${\bf\Gamma}_\infty^l$ and ${\bf\Gamma}_\infty^r$.
\end{coro}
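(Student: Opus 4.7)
The plan is to combine the preceding corollary (which gives diffeomorphisms ${\bf\Gamma}_\infty^l \cong \c_\infty \cong {\bf\Gamma}_\infty^r$) with a clopen-partition argument. By Theorems 6.5 and 7.6 of \cite{pq compacto}, $\c_\infty$ is a connected component of $\c$, so in particular it is connected; hence both ${\bf\Gamma}_\infty^l$ and ${\bf\Gamma}_\infty^r$ are connected. They are disjoint by definition, and by construction ${\bf\Gamma}_\infty^l \cup {\bf\Gamma}_\infty^r = {\bf\Gamma}_\infty$. It therefore suffices to show that each piece is closed (hence also open, being the complement in ${\bf\Gamma}_\infty$ of the other) in ${\bf\Gamma}_\infty$.

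To prove closedness, I would consider the continuous map $\Phi:\b(\h)\times\b(\h)\to\b(\h)$, $\Phi(A,B)=A(1-B)$. Since the ideal $\k(\h)$ is norm-closed in $\b(\h)$, the preimage $\Phi^{-1}(\k(\h))$ is closed in $\b(\h)\times\b(\h)$, and so its intersection with ${\bf\Gamma}_\infty$ is closed in ${\bf\Gamma}_\infty$. I claim this intersection equals ${\bf\Gamma}_\infty^l$. The inclusion $\supseteq$ is the alternative characterization
$$
{\bf\Gamma}_\infty^l=\{(P_\s,P_\t)\in{\bf\Gamma}: P_\s P_\t^\perp \hbox{ compact and } P_\t P_\s^\perp \hbox{ non compact}\}
$$
recorded just after the definition of ${\bf\Gamma}_\infty^l$. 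For the inclusion $\subseteq$, observe that $P_\s P_\t^\perp$ restricts to the identity on $\s\cap\t^\perp$, so its compactness forces $\dim \s\cap\t^\perp<\infty$; in a pair lying in ${\bf\Gamma}_\infty$ this rules out membership in ${\bf\Gamma}_\infty^r$ and places the pair in ${\bf\Gamma}_\infty^l$. Applying the symmetric argument to $(A,B)\mapsto B(1-A)$ gives the closedness of ${\bf\Gamma}_\infty^r$ in ${\bf\Gamma}_\infty$. With two disjoint, connected, closed subsets whose union is ${\bf\Gamma}_\infty$, each is necessarily a connected component.

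I do not anticipate a serious obstacle: connectedness is handed over by the diffeomorphism with $\c_\infty$, and the separation reduces to the fact that $\k(\h)$ is norm-closed in $\b(\h)$. The only point requiring care is the correct invocation of the equivalent formulations of the compactness condition collected at the start of Section \ref{without com compl}, to make sure the closed condition $P_\s P_\t^\perp\in\k(\h)$ matches the definitions of ${\bf\Gamma}_\infty^l$ and ${\bf\Gamma}_\infty^r$ on the nose.
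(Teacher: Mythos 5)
Your proof is correct, and it handles the crucial step by a genuinely different route than the paper. Both you and the paper get connectedness of ${\bf\Gamma}_\infty^l$ and ${\bf\Gamma}_\infty^r$ the same way: they are diffeomorphic to $\c_\infty$, which is connected by the results of \cite{pq compacto}. The difference lies in showing that no connected subset of ${\bf\Gamma}_\infty$ can meet both pieces. The paper argues by paths: it lifts an arbitrary continuous path $(P_{\s(t)},P_{\t(t)})$ in ${\bf\Gamma}_\infty$ through the fiber bundle $\u(\h)\to\p(\h)$, $U\mapsto UPU^*$, to unitary paths $U(t),V(t)$, and uses the stability of the semi-Fredholm index of the norm-continuous family $U^*(t)P_{\s(t)}V(t)\big|_{\t(0)}:\t(0)\to\s(0)$ to conclude the path cannot jump from index $+\infty$ to $-\infty$; implicitly this identifies path components with connected components, which is legitimate because these sets are manifolds, hence locally path connected. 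You instead produce a clopen partition directly: the set of pairs with $P_\s P_\t^\perp\in\k(\h)$ is closed since $(A,B)\mapsto A(1-B)$ is norm-continuous and $\k(\h)$ is norm-closed, and inside ${\bf\Gamma}_\infty$ this set is exactly ${\bf\Gamma}_\infty^l$ — one inclusion being the paper's alternative description of ${\bf\Gamma}_\infty^l$ (which rests on the corollary contained in Remark \ref{producto compacto}, i.e.\ on Lauzon--Treil), the other because $P_\s P_\t^\perp$ acts as the identity on $\s\cap\t^\perp$, so its compactness forces $\dim\s\cap\t^\perp<\infty$. Two disjoint, relatively closed (hence clopen), connected sets covering ${\bf\Gamma}_\infty$ are necessarily its components. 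Your argument is more elementary — no bundle lifting, no semi-Fredholm stability theory — and it sidesteps the path-versus-component subtlety altogether; what the paper's argument buys in exchange is the explicit identification of the semi-Fredholm index as the homotopy invariant separating the pieces, the same mechanism it exploits for the components ${\bf\Gamma}_1^n$ and for the subsequent proposition on relative openness in ${\bf\Gamma}$.
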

\begin{proof}
Let $(P_{\s(t)},P_{\t(t)})$ be a continuous path in ${\bf\Gamma}_\infty$, $t\in[0,1]$. In particular, $P_{\s(t)}$ and $P_{\s(t)}$ are continuous paths in $\p(\h)$. Since for each $P\in\p(\h)$  the map 
$$
\u(\h)\to\p(\h), \ U\mapsto UPU^*
$$
is a fiber bundle, there exist continuous curves $U(t), V(t)$ in $\u(\h)$, $t\in[0,1]$, such that 
$$
U(t)P_{\s(0)}U^*(t)=P_{\s(t)} \ \hbox{ and } \ V(t)P_{\t(0)}V^*(t)=P_{\t(t)} \ \hbox{ for } t\in[0,1].
$$
In particular, $U(t)$ maps $\s(0)$ onto $\s(t)$, and $V(t)$ maps $\t(0)$ onto $\t(t)$. Then 
$$
U^*(t)\left(P_{\s(t)}\big|_{\t(t)}\right)V(t)=U^*(t)P(t)V(t)\big|_{t(0)}:\t(0)\to\s(0)
$$
is a continuous path of semi-Fredholm operators between the (fixed) spaces $\t(0)$ and $\s(0)$. It folllows that the index (be it $+\infty$ or $-\infty$) remains constant, i.e. the curve $(P_{\s(t)},P_{\t(t)})$ remains entirely inside ${\bf\Gamma}_\infty^l$ or ${\bf\Gamma}_\infty^r$. 

Clearly, since both ${\bf\Gamma}_\infty^l$ and ${\bf\Gamma}_\infty^r$ are homeomorphic to $\c_\infty$, they are connected.
\end{proof}

\begin{prop}
The subsets ${\bf\Gamma}_1^n$ {\rm(}$n\ne 0${\rm )}, ${\bf\Gamma}_\infty^l$ and ${\bf\Gamma}_\infty^r$ are relatively open in ${\bf\Gamma}$.
\end{prop}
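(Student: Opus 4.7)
The plan is to realize each of the three subsets as the intersection of ${\bf\Gamma}$ with an open subset of $\p(\h) \times \p(\h)$ cut out by a (semi-)Fredholm condition on $P_\t\big|_\s : \s \to \t$. The corollaries and remark above identify: $(P_\s, P_\t) \in {\bf\Gamma}_1^n$ iff $(P_\s, P_\t) \in {\bf\Gamma}$ and $P_\t\big|_\s$ is Fredholm of index $n$; $(P_\s, P_\t) \in {\bf\Gamma}_\infty^l$ (resp.\ ${\bf\Gamma}_\infty^r$) iff $(P_\s, P_\t) \in {\bf\Gamma}$ and $P_\t\big|_\s$ is semi-Fredholm with finite-dimensional kernel and infinite-dimensional cokernel (resp.\ infinite kernel and finite cokernel). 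Granting these descriptions, the result follows from the classical fact that having a prescribed (semi-)Fredholm type and index is an open condition and locally constant on the set of (semi-)Fredholm operators.

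The one nontrivial step is that $\s$ and $\t$ themselves depend on $(P_\s, P_\t)$, so $P_\t\big|_\s$ is not literally an operator between a single pair of fixed Hilbert spaces. I would resolve this by working near a base point $(P_{\s_0}, P_{\t_0})$: for $(P_\s, P_\t)$ with $\|P_\s - P_{\s_0}\|<1$ and $\|P_\t - P_{\t_0}\|<1$, the unitaries $U_{\s_0, \s}$ and $U_{\t_0, \t}$ recalled in Section \ref{section 2} are well defined and depend smoothly on $(P_\s, P_\t)$. The conjugated operator
$$
F(P_\s, P_\t) := U_{\t_0, \t}^* \, P_\t\big|_\s \, U_{\s_0, \s} : \s_0 \to \t_0
$$
depends continuously on $(P_\s, P_\t)$, equals $P_{\t_0}\big|_{\s_0}$ at the base point, and shares its (semi-)Fredholm type and index with $P_\t\big|_\s$, being its conjugate by unitaries. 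Classical perturbation theory for (semi-)Fredholm operators then provides an open neighborhood $W$ of $(P_{\s_0}, P_{\t_0})$ on which $F$ retains the base point's (semi-)Fredholm data, and therefore so does $P_\t\big|_\s$. For $(P_\s, P_\t) \in W \cap {\bf\Gamma}$ the dimensions $\dim \s \cap \t^\perp = \dim \ker P_\t\big|_\s$ and $\dim \s^\perp \cap \t = \dim \mathrm{coker}\, P_\t\big|_\s$ take the same finite/infinite values as at the base point, placing the pair in the same one of ${\bf\Gamma}_1^n$, ${\bf\Gamma}_\infty^l$, ${\bf\Gamma}_\infty^r$ as the base point.

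The main obstacle I anticipate is precisely the unitary-conjugation step that converts the varying operator $P_\t\big|_\s$ into a continuous family $F(P_\s, P_\t)$ between fixed spaces. Once this translation is in place, the openness and local constancy of the (semi-)Fredholm type/index are immediate from classical perturbation theory, and the conclusion follows by unwinding the characterizations of the three subsets already established in this section.
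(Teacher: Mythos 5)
Your proposal is correct and takes essentially the same approach as the paper: the paper likewise transports the restricted projection (in its version the adjoint $P_\s\big|_\t:\t\to\s$, via unitaries $U_0,V_0$ with $U_0\s_0=\s$, $V_0\t_0=\t$ furnished by the closeness of the projections) to an operator between the fixed spaces $\s_0,\t_0$, estimates its distance to $P_{\s_0}\big|_{\t_0}$, and concludes by stability of the (semi-)Fredholm index under small norm perturbations. If anything, your formulation handles relative openness in ${\bf\Gamma}$ slightly more cleanly, since the paper's written argument takes the nearby pair in ${\bf\Gamma}_1$ rather than in ${\bf\Gamma}$, leaving the exclusion of nearby points of ${\bf\Gamma}_\infty$ implicit (it follows from the same perturbation argument you invoke).
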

\begin{proof}
Fix $(P_{\s_0},P_{\t_0})\in{\bf\Gamma}_1^n$, for $n\ne 0$. Let $(P_\s,P_\t)\in{\bf\Gamma}_1$ such that $\|P_\s-P_{\s_0}\|<r<1$ and $\|P_\t-P_{\t_0}\|<r<1$. Then there exist unitaries $U_0, V_0$ with $\|U_0-1\|, \|V_0-1\|<\delta_r$, and $\delta_r \to 0$ if $r\to 0$, such that $U_0\s_0=\s$ and $V_0\t_0=\t$ (i.e., $U_0P_{\s_0}U_0^*=P_\s$ and $V_0P_{\t_0}V_0^*=P_\t$). Then, since $P_\s$ determines a Fredholm operator between $\t$ and $\s$, $U_0^*P_\s V_0$ determines a Fredholm operator between $\t_0$ and $\s_0$, with the same index. Note that (since $U_0^*P_\s U_0=P_{\s_0}$)
$$
\|U_0^*P_\s V_0-P_{\s_0}\|\le\|U_0^*P_\s V_0-U_0^*P_\s U_0\|=\|(U_0^*P_\s)(U_0-V_0)\|\le\|U_0-V_0\|
$$
$$
\le \|U_0-1\|+\|1-V_0\|<2\delta_r.
$$
In particular, this implies that  $\|U_0^*P_\s V_0\big|_{\t_0}-P_{\s_0}\big|_{\t_0}\|<2\delta_r$ in $\b(\s_0,\t_0)$. This implies that for $r$ small enough (depending on $P_{\s_0}\big|_{\t_0}$),  $U_0^*P_\s V_0\big|_{\t_0}$ has the same index as  $P_{\s_0}\big|_{\t_0}$. It follows that (for such values of $r$) $(P_\s,P_\t)\in {\bf\Gamma}_1^n$. The argument for the components ${\bf\Gamma}_\infty^l$ and ${\bf\Gamma}_\infty^r$ is similar.
\end{proof}
\begin{coro}
${\bf\Gamma}$ is a smooth (non complemented) submanifold of $\p(\h)\times\p(\h)$.
\end{coro}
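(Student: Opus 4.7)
The plan is to assemble the preceding structural results into a gluing argument. The set ${\bf\Gamma}$ is the disjoint union of the ``mixed type'' blocks $\p_{i,j}\times\p_{k,l}$ with $(i,j)\ne(k,l)$ (which lie entirely in ${\bf\Gamma}$ by Remark \ref{fin or inf gamma}), together with the subsets ${\bf\Gamma}_1^n$ for $n\in\mathbb{Z}\setminus\{0\}$ and the two pieces ${\bf\Gamma}_\infty^l$, ${\bf\Gamma}_\infty^r$ sitting inside $\p_{\infty,\infty}\times\p_{\infty,\infty}$. The one piece of housekeeping is to verify that ${\bf\Gamma}_1^0=\emptyset$: a pair in ${\bf\Gamma}_1^n$ with $n=0$ would satisfy $\dim\s\cap\t^\perp=\dim\s^\perp\cap\t$ with both quantities finite, which together with the compactness already built into ${\bf\Gamma}_1$ is ruled out by the Lauzon--Treil characterization of $\fga$ recalled in Remark \ref{choreo}.3. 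Hence the list above truly exhausts ${\bf\Gamma}$.

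Next I would check that each piece is both a $C^\infty$ submanifold of $\p(\h)\times\p(\h)$ and relatively open in ${\bf\Gamma}$. The mixed blocks $\p_{i,j}\times\p_{k,l}$ are open in the whole ambient manifold, since each $\p_{i,j}$ is a connected component (hence open) of $\p(\h)$; they are therefore trivially submanifolds and in particular open in ${\bf\Gamma}$. The pieces ${\bf\Gamma}_1^n$ and ${\bf\Gamma}_\infty^{l/r}$ have already been shown to be $C^\infty$ submanifolds in the corollaries following Theorem \ref{teo 62}, while their relative openness in ${\bf\Gamma}$ is precisely the content of the Proposition preceding this corollary.

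The submanifold property for ${\bf\Gamma}$ then follows from the elementary principle that if a subset $M$ of a Banach manifold $B$ is covered by subsets $M_\alpha$ each of which is relatively open in $M$ and a $C^\infty$ submanifold of $B$, then $M$ is itself a $C^\infty$ submanifold of $B$: relative openness supplies open sets $V_\alpha\subset B$ with $M\cap V_\alpha=M_\alpha$, so on $V_\alpha$ any submanifold chart for $M_\alpha$ serves as a submanifold chart for $M$. The non-complementedness of ${\bf\Gamma}$ is inherited by the same local mechanism: at any point of, say, ${\bf\Gamma}_1^n$ the sets ${\bf\Gamma}$ and ${\bf\Gamma}_1^n$ agree in an ambient neighborhood, so their tangent spaces coincide, and the tangent space of ${\bf\Gamma}_1^n$ is known not to be complemented in $\b(\h)\times\b(\h)$ by the results of \cite{pq compacto}. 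The main (and essentially only) nontrivial point is the verification that ${\bf\Gamma}_1^0=\emptyset$; the rest is a straightforward assembly of facts already established.
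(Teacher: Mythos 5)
Your proposal is correct and follows exactly the route the paper intends: the corollary is stated without proof precisely because it assembles the preceding results — the pieces $\p_{i,j}\times\p_{k,l}$ (open in the ambient manifold), ${\bf\Gamma}_1^n$ and ${\bf\Gamma}_\infty^{l/r}$ (submanifolds via \cite{pq compacto}, relatively open in ${\bf\Gamma}$ by the preceding Proposition) — through the standard local gluing principle you state. Your housekeeping check that ${\bf\Gamma}_1^0=\emptyset$ is the content already built into Theorem \ref{teo 62}, so nothing is missing and nothing genuinely differs from the paper's argument.
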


\subsection{Other characterizations of $\fga$}\label{other}

We present other characterizations of $\fga$ in terms of sums or products of projections. 
In \cite{davis} Chandler Davis studied operators $A=P-Q$, where $P$ and $Q$ are orthogonal projections. They are selfadjoint contractions with certain spectral symmetries which we describe below. 
\begin{rem}
Let $P,Q\in\p(\h)$ and denote $A=P-Q$. The first two facts are elementary. The third asertion can be found in \cite{davis}.
\begin{itemize}
\item
$N(A)=N(P)\cap N(Q)\oplus R(P)\cap R(Q)$.
\item
$N(A-1)=R(P)\cap N(Q)$ and $N(A+1)=N(P)\cap R(Q)$.
\item
Denote by $\h_0=\h\ominus \left( N(A)\oplus N(A-1)\oplus N(A+1)\right)$ (usually named the {\it generic part} of $P$ and $Q$). Note that $\h_0$ reduces ($P$, $Q$ and) $A$, denote by $A_0=A\big|_{\h_0}$. Then Davis proved that there exists a symmetry $V=V_{P,Q}$ (by  symmetry we mean an operator $V=V^*=V^{-1}$, $VPV=Q$ and $VQV=P$). In particular, $VA_0V=-A_0$. The spectrum $\sigma(A_0)$ of $A_0$ is contained in $[-1,1]$. The existence of this symmetry $V$ implies that $\sigma(A_0)$ is symmetric with respect to the origin, $\lambda\in\sigma(A_0)$ if and only if $-\lambda\in\sigma(A_0)$, and the multiplicity function is symmetric. 
\end{itemize}
The spectral  symmetry of the reduction $A_0$ may break for $A$ at the extremes of the spectrum: $N(A-1)$ and $N(A+1)$ may have different dimensions.

Chandler Davis in fact showed much more: that the decompositions of $A_0$ (as diference of projections) are in one to one correspondence with the operators $V$ implementing this symmetry, with explicit formulas to recover $P$ and $Q$ from $V$.
\end{rem}

We claim that the condition that $\s$ and $\t$ admit a common complement is in fact a property of the operator $A=P_\s-P_\t$. To this purpose, we shall need the following facts from \cite{pq vs p-q}, which relate the difference  $A$ with the product $B:=P_\s P_\t$.

We shall say that $ T$ is S-decomposable if it has a  singular value (or Schmidt) decomposition,
\begin{equation}\label{S}
T=\sum_{n\ge 1} s_n\langle\ \ \ , \xi_n\rangle\psi_n=\sum_{n\ge 1}s_n \psi_n\otimes\xi_n,
\end{equation}
where $\{\xi_n: n\ge 1\}$ and $\{\psi_n: n\ge 1\}$ are orthonormal systems, and $s_n>0$. In this case, $\{\psi_n\}$, $\{\xi_n\}$ are orthonormal bases of $\overline{R(T)}$, $N(T)^\perp$, respectively and $T\xi_n=s_n\psi_n$, $T^*\psi_n=s_n\xi_n$, $T^*T\xi_n=s_n^2\xi_n$, $TT^*\psi_n=s_n^2\psi_n$ for all $n\ge 1$.

\begin{rem} (see \cite[Thms. 2.2, 2.4 and Rem. 2.3]{pq vs p-q})
Let $P,Q$ be orthogonal projections. 
\begin{enumerate}
\item
Suppose that $B=PQ$ is S-decomposable with singular values $s_n$. Then $A=P-Q$ is diagonalizable, with eigenvalues $\pm(1-s_n^2)^{1/2}$, $n\ge 1$, and maybe $0, -1$ and $1$.
\item
If $A=P-Q$ is diagonalizable with (non zero) eigenvalues $\pm\lambda_n$ ($0<|\lambda_n|<1$) and $\pm 1$ , then $B=PQ$ is S-decomposable with singular values $(1-\lambda_n^2)^{1/2}$ and $1$.
\item
Except for $1$ and $-1$, the eigenvalues $(1-s_k^2)^{1/2}$ and $-(1-s_k^2)^{1/2}$ of $A$ have the same multiplicity. For the eigenvalues in $(-1,1)$ (or the singular values in $(0,1)$ we have the following relation: the multiplicity of $(1-\lambda^2_n)^{1/2}$ as a singular value of  $B$  is $2$ times the multiplicity of $\lambda_n$ as an eigenvalue of $A$. 
\end{enumerate}
\end{rem}
\begin{rem} 
An elementary observation concerning this property, is that $PQ$ is S-decomposable if and only if $P(1-Q)$ also is. And since clearly in this case also $QP$ has the same property, it follows that also $(1-P)Q$ and $(1-P)(1-Q)$ are S-decomposable (see \cite[Prop. 2.7]{pq vs p-q}). Therefore the following condition can be added to the list above:

\noindent

4. (see \cite[Corol. 2.8]{pq vs p-q}) $P-Q$ is diagonalizable if and only if $P+Q$ is diagonalizable.
In that case, $\lambda_n$ is an eigenvalue of $P-Q$ with $0<|\lambda_n|<1$  if and only if $1\pm (1-\lambda_n)^2$ is an eigenvalue of $P+Q$, with the same multiplicity. 
\end{rem}
Putting these fact together, we have the following:
\begin{teo}\label{A y B}
Let $\s,\t$ be closed subspaces of $\h$. Then $\s$ and $\t$  do not have a common complement if and only if  the following conditions hold:
\begin{enumerate}
\item
$\dim N(A-1)\ne \dim N(A+1)$
\item
$A_0=A\big|_{\h_0}$ is compact, where $\h_0=\left(N(A-1)\oplus N(A+1)\right)^\perp$.
\end{enumerate}
\end{teo}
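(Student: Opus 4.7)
The plan is to translate both conditions of the theorem into Halmos' model and then invoke Remark \ref{Halmos Giol}. Set $P = P_\s$, $Q = P_\t$, and $A = P - Q$. The identifications $N(A-1) = \s\cap\t^\perp$ and $N(A+1) = \s^\perp\cap\t$ already recorded in the bullets preceding this theorem turn condition (1) literally into the dimensional inequality appearing in Lauzon and Treil's characterization of $\fga$.

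For condition (2), I would pass to Halmos' five-subspace decomposition \eqref{paul h}. Write $\m$ for Halmos' generic part (the orthogonal complement of the four intersection subspaces) and $P_0, Q_0$ for the corresponding reductions. On $\m \cong \l \times \l$ one has
$$
P_0 = \begin{pmatrix} 1 & 0 \\ 0 & 0 \end{pmatrix}, \qquad Q_0 = \begin{pmatrix} C^2 & CS \\ CS & S^2 \end{pmatrix},
$$
with $C = \cos X$, $S = \sin X$, where $X$ is positive and injective with $\|X\| \le \pi/2$. A direct block computation using $C^2 + S^2 = 1$ yields
$$
P_0 - Q_0 = \begin{pmatrix} S^2 & -CS \\ -CS & -S^2 \end{pmatrix}, \qquad (P_0 - Q_0)^2 = \begin{pmatrix} S^2 & 0 \\ 0 & S^2 \end{pmatrix}.
$$
Since $P_0 - Q_0$ is selfadjoint, it is compact iff its square is compact, iff $S = \sin X$ is compact, iff (using that $X$ is positive injective with $\|X\| \le \pi/2$, so that $\arcsin$ is continuous on the spectrum and vanishes at $0$) $X$ itself is compact. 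The theorem's $\h_0 = (N(A-1) \oplus N(A+1))^\perp$ decomposes as $N(A) \oplus \m$, with $N(A) = (\s\cap\t) \oplus (\s^\perp\cap\t^\perp)$, and $A$ vanishes on $N(A)$; hence $A_0 = 0 \oplus (P_0 - Q_0)$ is compact iff $P_0 - Q_0$ is compact, iff $X$ is compact.

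Combining these translations, conditions (1) and (2) together read precisely ``$\dim \s\cap\t^\perp \ne \dim \s^\perp\cap\t$ and $X$ is compact'', which is exactly the negation of the characterization of $\fde$ recorded in Remark \ref{Halmos Giol}. Hence (1) and (2) hold simultaneously iff $(\s,\t) \in \fga$. The only delicate point (and the main potential obstacle) is the discrepancy between the theorem's $\h_0$ and Halmos' generic part $\m$, which is cleanly handled by the vanishing of $A$ on $N(A)$.
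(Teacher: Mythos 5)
Your proof is correct. The identifications $N(A-1)=\s\cap\t^\perp$ and $N(A+1)=\s^\perp\cap\t$ settle condition (1) exactly as in the paper; your block computation $(P_0-Q_0)^2=S^2\oplus S^2$ is right (the off-diagonal terms cancel because $C$ and $S$ commute, being functions of the same $X$); and the bridge $\h_0=N(A)\oplus\m$, with $A$ reducing to $0\oplus(P_0-Q_0)$, correctly handles the discrepancy between the theorem's $\h_0$ and Halmos' generic part (the degenerate case $\l=\{0\}$ causes no trouble, since then $A_0=0$ and the remark reduces to the pure dimension condition). Where you genuinely diverge from the paper is at the decisive compactness step. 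The paper never writes down the $C$--$S$ matrices in its proof: it translates Lauzon--Treil's condition ``$(1-G^*G)|_{N(G)^\perp}$ compact'' through the five-space decomposition into ``$P_0Q_0^\perp$ compact,'' and then passes from this \emph{product} to the \emph{difference} $A_0$ by invoking the S-decomposability results of \cite{pq vs p-q}, i.e.\ the spectral correspondence between singular values of $PQ$ and eigenvalues of $P\pm Q$. You instead establish ``$A_0$ compact iff $X$ compact'' by a direct matrix computation and elementary functional calculus ($\arcsin$ vanishing at $0$), and then close the loop by citing Remark \ref{Halmos Giol}, which already records Giol's translation of the Lauzon--Treil criterion into Halmos' model. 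Your route is more elementary and avoids the product/difference machinery entirely, at the cost of leaning on that remark; the paper's route re-derives the compactness translation from scratch but needs the heavier apparatus of \cite{pq vs p-q}. There is no circularity in your use of the remark, since it is a preliminary fact stated independently of the theorem, so both arguments are legitimate.
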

\begin{proof}
The proof is just the translation of the two conditions proven by Lauzon and Treil \cite{lauzontreil}, transcribed here in Remark \ref{choreo}.3.
The first condition is just the translation of $\dim\s\cap\t^\perp\ne \dim\s^\perp\cap\t$.
With respect to the second condition (compacteness of $1-G^*G$ as an operator on $N(G)^\perp$) we use Halmos' decomposition:
$$
\s\cap\t \oplus \s^\perp\cap\t^\perp\oplus \s\cap\t^\perp \oplus \s^\perp\cap\t\oplus\h_0.
$$
Note that $1-G^*G$ in $\s$ coincides with $P_\s-P_\s P_\t P_\s=P_\s P_\t^\perp P_\s$ in $\s$. Then clearly $1-G^*G$ is compact in  $\s\ominus (\s\cap\t^\perp)$ if and only  $P_\s P_\t^\perp P_\s$ is compact in $\s^\perp \oplus (\s\ominus (\s\cap\t^\perp))=(\s\cap\t^\perp)^\perp$, (because $P_\s P_\t^\perp P_\s$ is trivial in $\s^\perp$).
Note that $P_\s P_\t^\perp P_\s$ in the decomposition
$$
(\s\cap\t^\perp)^\perp=\s\cap\t \oplus \s^\perp\cap\t^\perp\oplus \s^\perp\cap\t \oplus\h_0,
$$
is 
$$
P_\s P_\t^\perp P_\s=0 \oplus 0 \oplus 0 \oplus P_0 Q_0^\perp P_0,
$$
where $P_0$, $Q_0$ denote the reductions of $P_\s$ and $P_\t$ to $\h_0$, respectively.
Therefore the second condition is equivalent to $P_0 Q_0^\perp P_0$ being compact. This is equivalent to $P_0 Q_0^\perp$ being compact.
This in turn is equivalent to $P_0 Q_0^\perp$ being S-decomposable with singular values $s_k$ such that  $s_k$ has finite multiplicity and the sequence is finite or else $s_k\to 0$. By the above facts, this is equivalent to 
$$
P_0+Q_0^\perp=A_0+1
$$
being diagonalizable, with eigenvalues $\lambda_k$ of finite multiplicity, and the relations above (between eigenvalues of the sum  of two projections, namely $P_0$ and $Q_0^\perp$, the eigenvalues of the difference and the singular values of the product) mean that $\lambda_k$ have finite multiplicity,  are finite or else $\lambda_k\to 1$ (an important observation here is that neither $+1$ nor $-1$ are eigenvalues of $A_0$, because $P_0$ and $Q_0$ are in generic position). This is clearly equivalent to $A_0$ being diagonalizable with eigenvalues $\alpha_k$ of finite multiplicity, being finitely many, or else $\alpha_k\to 0$, i.e., $A_0$ compact.
\end{proof}
\begin{rem}\label{producto compacto}
One can further elaborate on the argument of the above theorem. Suppose that $\s$ and $\t$ do not have a common complement. Then since $\s\cap\t^\perp$ and $\s^\perp\cap\t$ have different dimensions, one of the two dimensions must be finite. If $\dim \s\cap\t^\perp<\infty$, then it follows that (with the argument above) $P_\s P_\t^\perp P_\s$ (or equivalently, $P_\s P_\t^\perp$)  is compact in in the whole space $\h$. Similarly, if $\dim \s^\perp\cap\t <\infty$, $P_\t P_\s^\perp$ is compact. 

Summarizing, we have
\begin{coro}
$\s$ and $\t$ do not have a common complement if and only if \begin{itemize}
\item
$\dim \s\cap\t^\perp\ne \dim \s^\perp\cap\t$, and
\item
either $P_\s P_\t^\perp$ or $P_\t P_\s^\perp$ is compact.  
\end{itemize}
\end{coro}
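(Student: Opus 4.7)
The plan is to obtain the Corollary as a direct reformulation of Theorem \ref{A y B}, using the analysis already sketched in Remark \ref{producto compacto}. The forward direction is essentially spelled out there; what the Corollary really adds is the converse, which will follow by reading the same Halmos-style decomposition of $P_\s P_\t^\perp$ backwards.

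For the forward implication I would argue as follows. Assume $(P_\s,P_\t)\in\fga$. Theorem \ref{A y B} yields $\dim \s\cap\t^\perp \neq \dim \s^\perp\cap\t$ (the first bullet) together with compactness of $A_0=A\big|_{\h_0}$. Since $\h$ is separable, any two infinite-dimensional closed subspaces share the dimension $\aleph_0$, so this inequality forces at least one of the two intersections to be finite dimensional; assume $\dim \s\cap\t^\perp<\infty$, the other case being symmetric. Decomposing $\h$ in the five-space Halmos fashion as $(\s\cap\t)\oplus(\s^\perp\cap\t^\perp)\oplus(\s\cap\t^\perp)\oplus(\s^\perp\cap\t)\oplus\h_0$, the operator $P_\s P_\t^\perp$ vanishes on the first, second and fourth summands, acts as the identity on $\s\cap\t^\perp$ (so is finite rank there), and reduces to $P_0 Q_0^\perp$ on $\h_0$ (so is compact there, since the proof of Theorem \ref{A y B} already established that compactness of $A_0$ is equivalent to compactness of $P_0 Q_0^\perp$). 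Therefore $P_\s P_\t^\perp$ is compact on all of $\h$.

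For the converse I would simply reverse this reading. Suppose the two dimensions differ and, say, $P_\s P_\t^\perp$ is compact. Its restriction to $\s\cap\t^\perp$ is the identity, so compactness forces $\dim \s\cap\t^\perp<\infty$, and its restriction to $\h_0$ is $P_0 Q_0^\perp$, whose compactness yields compactness of $A_0$ by the chain of equivalences used inside the proof of Theorem \ref{A y B}. Both hypotheses of that theorem now hold, so $\s$ and $\t$ admit no common complement; the case of compact $P_\t P_\s^\perp$ is symmetric. The only real obstacle is the bookkeeping: one must keep in mind that the single compactness condition on the whole of $\h$ secretly encodes \emph{both} the finite dimensionality of $\s\cap\t^\perp$ and the compactness of $A_0$ on $\h_0$, and once this splitting is made explicit the Corollary is immediate from Theorem \ref{A y B}.
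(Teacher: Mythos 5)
Your proof is correct and takes essentially the same route as the paper: the corollary is obtained as an elaboration of Theorem \ref{A y B}, using separability to force one of $\s\cap\t^\perp$, $\s^\perp\cap\t$ to be finite dimensional and the Halmos decomposition to see that compactness of $P_0Q_0^\perp$ on the generic part (equivalently, of $A_0$) upgrades to compactness of $P_\s P_\t^\perp$ on all of $\h$, and conversely. This is precisely the bookkeeping carried out in Remark \ref{producto compacto} (stated there explicitly for the forward direction, with the converse implicit from restricting a compact operator), so your write-up matches the paper's argument.
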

\end{rem}

 Let us cite the following result from \cite{pq compacto} which is relevant to our situation:

\begin{rem}(\cite[Thm. 4.1]{pq compacto}) Let $P,Q$ be orthogonal projections. Then $PQ$ is compact if and only if there exist orthonormal basis $\{\xi_n: n\ge 1\}$ of $R(P)$ and  $\{\psi_n: n\ge 1\}$ of $R(Q)$  such that 
$$
\langle\xi_n,\psi_k\rangle=0 \ \hbox{  if  } \ n\ne k
$$
and 
$$
\langle\xi_n, \psi_n\rangle\to 0 \ \  {\rm(}n\to \infty{\rm)}.
$$
In the literature (see for instance \cite{folland}), in the context of example \ref{incertidumbre} below, these are called {\it bi-orthogonal bases}.
\end{rem}

\begin{coro}
Let $\s,\j$ be closed subspaces of $\h$. Then there exist no common complements for $\s$ and $\t$ if and only if 
$\dim \s\cap\t^\perp\ne \dim \s^\perp\cap\t$ and, if the first of these two numbers is finite there exist orthonormal bases $\{\xi_n: n\ge 1\}$ of $\s$ and  $\{\psi_n: n\ge 1\}$ of $\t^\perp$ such that 
$$
\langle\xi_n,\psi_k\rangle=0 \ \hbox{  if  } \ n\ne k
$$
and 
$$
\langle\xi_n, \psi_n\rangle\to 0 \ \  {\rm(}n\to \infty{\rm)}.
$$
If instead $\dim \s^\perp\cap\t<\infty$, then there exist ohonormal bases of $\s^\perp$ and $\t$ with the analogous property. 
\end{coro}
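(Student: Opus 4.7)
The plan is to deduce this corollary directly by combining the previous Corollary (in Remark \ref{producto compacto}) with the characterization of compactness of a product of two orthogonal projections via biorthogonal bases, cited from Theorem 4.1 of \cite{pq compacto}. There is essentially nothing new to prove: it is a translation of the compactness condition $P_\s P_\t^\perp \in \k(\h)$ (or $P_\t P_\s^\perp\in\k(\h)$) into the language of bases of $\s$ and $\t^\perp$ (or of $\t$ and $\s^\perp$).

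For the forward direction, I would assume that $\s$ and $\t$ have no common complement. By the previous Corollary, this gives $\dim \s\cap\t^\perp\ne \dim \s^\perp\cap\t$ together with compactness of at least one of the products $P_\s P_\t^\perp$ or $P_\t P_\s^\perp$. Since the two dimensions differ, at least one of them is finite. I would then invoke Remark \ref{producto compacto} to pair the two conditions correctly: if $\dim \s\cap\t^\perp<\infty$, then $P_\s P_\t^\perp$ is compact on all of $\h$, and applying the cited Theorem 4.1 of \cite{pq compacto} with $P=P_\s$ and $Q=P_\t^\perp$ yields the orthonormal bases $\{\xi_n\}$ of $R(P_\s)=\s$ and $\{\psi_n\}$ of $R(P_\t^\perp)=\t^\perp$ with $\langle\xi_n,\psi_k\rangle=0$ for $n\ne k$ and $\langle\xi_n,\psi_n\rangle\to 0$. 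The case $\dim\s^\perp\cap\t<\infty$ is symmetric, obtained by swapping the roles of $\s$ and $\t$, i.e., applying the same theorem to $P=P_\t$, $Q=P_\s^\perp$.

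For the converse, suppose $\dim \s\cap\t^\perp\ne \dim \s^\perp\cap\t$ and the described biorthogonal bases exist for, say, $\s$ and $\t^\perp$ in the case $\dim\s\cap\t^\perp<\infty$. The ``if'' direction of Theorem 4.1 of \cite{pq compacto}, applied again with $P=P_\s$, $Q=P_\t^\perp$, shows that $P_\s P_\t^\perp$ is compact. The previous Corollary then immediately implies that $\s$ and $\t$ admit no common complement. The other case is treated identically by symmetry.

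The only subtlety worth flagging is the matching between ``which product is compact'' and ``which intersection is finite dimensional''. This is not automatic from Theorem \ref{A y B} (whose compactness hypothesis lives on the generic part $\h_0$), but Remark \ref{producto compacto} already records the bookkeeping, so no obstacle arises. Everything else is a direct substitution of $R(P)=\s$, $R(Q)=\t^\perp$ (or the symmetric choice) into the cited biorthogonal-basis result.
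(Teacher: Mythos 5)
Your proof is correct and takes essentially the same route as the paper's: the paper's own (one-line) argument likewise observes that compactness of $P_\s P_\t^\perp P_\s$ is equivalent to compactness of $P_\s P_\t^\perp$, so that the biorthogonal-basis characterization of \cite[Thm. 4.1]{pq compacto} applies verbatim on top of the corollary in Remark \ref{producto compacto}. Your write-up merely makes explicit the pairing (finite intersection $\leftrightarrow$ which product is compact) that the paper leaves to Remark \ref{producto compacto}.
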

\begin{proof} The condition $P_\s P_\t^\perp P_\s$ compact is equivalent to $P_\s P_\t^\perp$ compact, and thus it the result in the above remark applies verbatim.
\end{proof}

\section{Geodesics in $\Delta$ and ${\bf\Gamma}$}\label{geodesicas en delta y gamma}

  We first observe the following facts about geodesics and common complements.

\begin{rem}
\noi $1.$ The condition that $\dim N(G)=\dim \s\cap\t^\perp$ equals $\dim N(G^*)=\dim \s^\perp\cap\t$ (which is stronger than condition (\ref{la condicion})) is equivalent to the existence of a geodesic of $\p(\h)$ joining $P_\s$ and $P_\t$. Moreover,   following the explicit construction of these geodesics (see for instance \cite{p-q}), it can be shown that if $P(t)$ is a minimal geodesic joining $P(0)=P_\s$ with $P(1)=P_\t$, then for any pair $t, t'\in[0,1]$, $R(P(t))$ and $R(P(t'))$ have a common complement.

\medskip

\noi $2.$ As the condition for the existence of a geodesic is stronger than the condition for a common complement, it follows that two projections $P_\s$, $P_\t$ such that  $(P_\s$, $P_\t) \in \fde$ may or may not be joined by a geodesic. However, in this case one can always join $P_\s$ and $P_\t$ by a polygonal consisting in two minimizing geodesics in $\p(\h)$. This is a consequence of Giol's characterization in Remark \ref{Charact Giol} $i)$. Indeed, the existence of a projection $P$ such that $\| P_\s - P\|<1$ and $\| P- P_\t\|<1$ implies that there is a minimizing geodesic joining $P_\s$ and $P$, and another minimizing geodesic joining $P$ and $P_\t$.

\end{rem}

Since the sets $\fde$ and $\fga$ are included in $\p(\h) \times \p(\h)$, we may also ask for the existence of minimizing geodesics joining points of $\fde$ or $\fga$. This leads us to study  when geodesics of $\p(\h)\times \p(\h)$ remain  in 
$\fde$ or $\fga$ along their paths.   We endow $\p(\h) \times \p(\h)$ with the product Finsler metric. The length of a piecewise $C^1$ curve 
$\delta:[0,1] \to \p(\h) \times \p(\h)$, $\delta=(\delta_1 , \delta_2)$, is given by
$$
L(\delta)=\int_0^1 (\| \dot{\delta_1} \|^2  + \| \dot{\delta_2}   \|^2 )^{1/2} dt.
$$
Thus, the curve  $\delta$ is a (minimizing) geodesic in $\p(\h) \times \p(\h)$ if and only if $\delta_1$ and $\delta_2$ are (minimizing) geodesics in $\p(\h)$.

\begin{rem}
We first discuss the case of projections with range or nullspace of finite dimension. 
\begin{itemize}
\item[1.] By Remark \ref{comp fin} the connected components with finite rank or corank of $\fde$ are given by $\fde_{ij}=\p_{i,j} \times \p_{i,j}$  ($i <\infty$ or $j <\infty$). In this case every pair of points in $\p_{i,j} \times \p_{k,l}$ can be joined by  a minimizing geodesic contained in $\fde$.
\item[2.] By Remark \ref{fin or inf gamma} for the case of $\fga$ we known that these connected components are given by
$
\fga_{ijkl}=\p_{i,j} \times \p_{k,l}$, when $i \neq k$  or $j \neq l$. 
If $i \neq k$, then there is a minimizing geodesic contained in $\fga$ joining two points in $\p_{i,j} \times \p_{k,l}$  if and only there is a minimizing geodesic joining them in  $\p(\h)\times \p(\h)$. Indeed, given   $(P_0,Q_0), (P_1, Q_1) \in \p_{i,j} \times \p_{k,l}$, the  condition for the existence of a geodesic joining them does not necessarily hold. But when this condition is satisfied, any geodesic must be contained in $\fga$, since $\corank (P_t)=\corank (P_0)$ and $\corank (Q_t)=\corank( Q_0)$ for all $t$. The case with $j\neq l$ follows again by using the map $\perp$.
\end{itemize}
\end{rem}

Now we turn to the case of projections in $\p_{\infty,\infty} \times \p_{\infty,\infty}$. In contrast with the previous situation, the following example illustrates that, in general, there is no minimizing geodesic contained in $\fde_\infty$ joining two given points. Furthermore, the example exhibits arbitrary close points in $\fde_\infty$ without a minimizing geodesic (contained in $\fde_\infty$) that  joins them.



 \begin{ejem}\label{un ejemplo}
We begin by giving a pair of projections $(P_0,Q_0)\in \fga$. 
Let $\l$ be an infinite dimensional closed subspace such that $\l \times \l=\h$. Take $c$ an orthogonal projection on $\l$ such that $\dim R(c)=\dim N(c)=\infty$. 
We set
$$
P_0=\begin{pmatrix} 1  & 0  \\ 0  & 0  \end{pmatrix}, \, \, \, \, \, Q_0=\begin{pmatrix} 1  & 0  \\ 0   & c  \end{pmatrix}. 
$$
Since  $P_0-P_0Q_0P_0=0$, $\dim R(P_0) \cap N(Q_0)= 0$ and $\dim R(Q_0) \cap N(P_0)=\dim R(c) =\infty$, we  know that $(P_0,Q_0) \in \fga$. 

Now we construct a geodesic $\delta(t)=(P_t,Q_t)$, $t \in \RR$, such that $\delta(0)=(P_0,Q_0)$. Take an operator $ Z \geq 0$ on $\l$ and put $c^\perp=1-c$, then we define the  following antihermitian operators
$$
X=\begin{pmatrix}  0  & - Z  \\  Z   &  0 \end{pmatrix}, \, \, \, \, \, \, \, \, \, \, Y=\begin{pmatrix}  0   &  -\frac{\pi}{2}c^\perp \\    \frac{\pi}{2}c^\perp   &    0  \end{pmatrix}.
$$
 For $t \in \RR$, we have 
the unitary operators
$$
e^{tX}=\begin{pmatrix} \cos(tZ)   &   -\sin(tZ)  \\   \sin(tZ)   &   \cos(tZ) \end{pmatrix}, \, \, \, \, \, 
e^{tY}=\begin{pmatrix}c+ \cos(t\frac{\pi}{2}) c^\perp   &   -\sin(t\frac{\pi}{2}) c^\perp  \\   \sin(t\frac{\pi}{2})  c^\perp  &  c+  \cos(t\frac{\pi}{2}) c^\perp \end{pmatrix}.
$$ 
According to the results in \cite{p-q}, since $X$ is a $P_0$-codiagonal operator and $Y$ is a $Q_0$-codiagonal operator,  two geodesics of $\p(\h)$ passing through the point  $(P_0,Q_0)$ are given by
\begin{align*}
P_t & :=e^{tX}P_0 e^{-tX}=\begin{pmatrix}   \cos^2(tZ)    &   \cos(tZ)\sin(tZ) \\  \cos(tZ)\sin(tZ) & \sin^2(tZ) \end{pmatrix},\\
Q_t & := e^{tY} Q_0  e^{-tY}=\begin{pmatrix}  c+ \cos^2(\frac{\pi}{2}t) c^\perp   &   \cos(\frac{\pi}{2}t)\sin(\frac{\pi}{2}t)  c^\perp\\  \cos(\frac{\pi}{2}t)\sin(\frac{\pi}{2}t) c^\perp & c+ \sin^2(\frac{\pi}{2}t) c^\perp \end{pmatrix}.
\end{align*}
 In what follows, we further assume  $t \in (-\frac{1}{2},\frac{1}{2})$, $t\neq 0$, $cZ=Zc$ and $\sigma( Z ) \subseteq (0,\frac{\pi}{2})$. We observe the following facts:

 \medskip
 
 \noi 1. $R(P_t)\cap N(Q_t)=\{ 0 \}$. To see this, set $\la_t=\sin(\frac{\pi}{2}t)\cos(\frac{\pi}{2}t)^{-1}$, and note  
 \begin{align*}
 N(Q_t) &=N(Q_0 e^{-tY})=\left\{   \begin{pmatrix} f  \\  g  \end{pmatrix} : \begin{pmatrix}  c+ \cos(t\frac{\pi}{2}) c^\perp   & -  \sin(t\frac{\pi}{2}) c^\perp \\ 0 & c   \end{pmatrix} \begin{pmatrix} f  \\  g  \end{pmatrix}= \begin{pmatrix} 0  \\  0  \end{pmatrix}   \right\}\\
& =\left\{   (\la_t g ,g ) :  g \in N(c)    \right\}.
\end{align*}
Since $\cos(tZ)$ and $\sin(tZ)$ are invertible operators by the previous assumptions on $t$ and $Z$, we set $T_t=\sin(tZ)^{-1}\cos(tZ)$ to get
\begin{align*}
R(P_t)  & =  R(e^{tX}P_0)= \left\{   \begin{pmatrix}  \cos(tZ) &  0 \\ \sin(tZ)   &  0  \end{pmatrix}  
\begin{pmatrix}    f \\ g     \end{pmatrix}  : f,g \in \l \right\} \\ 
& =\{   (T_t g, g) :  g \in \l \}.
\end{align*}
 Using that  $\la_t >0$ and $T_t>0$, we get
 $$
 R(P_t)\cap N(Q_t)=\{  (T_t g,g) : g \in N(c), \, g \in N(T_t - \la_t) \}=\{  0 \}.
 $$
To prove the last equality, note that $T_t g=\la_t g$ if and only if $$\cos(tZ) \sin^{-1}(tZ)g =\sin(\frac{\pi}{2}t)\cos^{-1}(\frac{\pi}{2}t) g.$$ By the spectral theorem, this means that there is $\mu \in \sigma(Z)$ such that 
$1=\tan(t\frac{\pi}{2})\tan(t \mu)$. But this condition does not hold since $t\frac{\pi}{2},  t \mu \in (-\frac{\pi}{4}, \frac{\pi}{4})$.
 
 \medskip
 
 \noi 2. $\dim (N(P_t)\cap R(Q_t))=\dim R(c)=\infty$.  For we note that 
 \begin{align*}
R(Q_t) &=N(Q_t)^\perp=(R(c)\times R(c)) \oplus \{  (f,-\la_t f) : f \in N(c) \},  \\
N(P_t) & = R(P_t)^\perp = \{  (f, -T_t f)  :  f \in \l  \}.
 \end{align*}
 Recalling that $cZ=Zc$, we find that
 \begin{align*}
 N(P_t)\cap R(Q_t)& = \{  (f,-T_tf) :  f \in N(c^\perp (T_t + \la_t)) \}\\
 & \supseteq  \{  (f, -T_t f)  : f \in R(c) \},
 \end{align*}
which gives $\dim (N(P_t)\cap R(Q_t))=\infty$.
 
 \medskip
 
 \noi 3.  Set $G_t=Q_t|_{R(P_t)}: R(P_t) \to R(Q_t)$. Notice that $N(G_t)^\perp=(R(P_t) \cap N(Q_t))^\perp=R(P_t)$ by the first item. We claim that $1-G_t^* G_t|_{R(P_t)}: R(P_t) \to R(P_t)$ is compact if and only if $c^\perp \sin^2 (t(\frac{\pi}{2}c^\perp - Z))$ is compact.  For we observe that $XY=YX$, so that $e^{-tX}e^{tY}=e^{t(Y-X)}$. Therefore, we have that $P_t - G_t^*G_t=e^{tX}[P_0 - P_0 e^{t(Y-X)} Q_0 e^{-t(Y-X)}P_0 ]e^{-tX}$ is compact if and only if $P_0 - P_0 e^{t(Y-X)} Q_0 e^{-t(Y-X)}P_0$ is compact. Set $P_t=t(\frac{\pi}{2}c^\perp-Z)$,  we compute
 \begin{align*}
P_0 e^{t(Y-X)} Q_0 e^{-t(Y-X)} P_0 & = P_0 \begin{pmatrix} \cos(P_t)   &   -\sin(P_t)  \\   \sin(P_t)   &   \cos(P_t) \end{pmatrix}  \begin{pmatrix} 1   &   0  \\   0   &   c\end{pmatrix}
 \begin{pmatrix} \cos(P_t)   &   \sin(P_t)  \\   -\sin(P_t)   &   \cos(P_t) \end{pmatrix}P_0 \\
 & =   \begin{pmatrix} \cos^2(P_t)    + c \sin^2(P_t) &   0  \\   0  &   0 \end{pmatrix}.
\end{align*}
 Therefore, $P_0 - P_0 e^{t(Y-X)} Q_0 e^{-t(Y-X)} P_0 $ is compact if and only if $c^\perp \sin^2(P_t)$ is compact, which proves our claim. Also observe that one can construct  examples, where $Z$ satisfies the previous conditions, such that $X_t:=c^\perp \sin^2(P_t)$ is compact or not. For instance, if $Z$ satisfies $N(Z)^\perp=R(Z)\subseteq N(c)$, then this depends on when $\frac{\pi}{2}c^\perp-Z$ is compact or not.

 \medskip
 
From these observations, we obtain that $(P_t,Q_t) \in \Delta$ for all  $t \in (-\frac{1}{2},\frac{1}{2})$, $t \neq 0$,   if and only if we take $Z$ such that $X_t$ is not compact. Thus, we can choose $Z$ satisfying this condition to get a family of curves $\delta^t(s)=(\delta^t_1(s), \delta^t_2(s))$, $s \in [0,1]$, defined by 
\begin{align*}
\delta^t_1(s)& :=e^{2stX}e^{-tX} P_0 e^{tX} e^{-2stX},\\
\delta^t_2(s) &:= e^{2stY}e^{-tY} Q_0 e^{tY} e^{-2stY},
\end{align*} 
which are geodesics in $\p(\h) \times \p(\h)$ joining $(P_{-t},Q_{-t}) \in \fde$ and $(P_t,Q_t)\in \fde$ such that $\delta^t(s)\in \fde$, $s\neq \frac{1}{2}$ and $\delta^t(\frac{1}{2})=(P_0,Q_0) \in \fga$. Furthermore, we know that $\delta^t$ has minimal length since $t\in (-\frac{1}{2},\frac{1}{2})$, $\|Z\| \leq \frac{\pi}{2}$,  $X$ is co-diagonal with respect to $P_0$ and $Y$ is co-diagonal with respect to $Q_0$.  Minimizing geodesics of $\p(\h)$ are uniquely determined for endpoints lying at distance less than $1$. Hence, the points $(P_{-t,}Q_{-t}), (P_t,Q_t) \in \fde$ can be made arbitrary close to each other if one takes $t$ small enough, and they  cannot be joined by a minimizing geodesic contained in $\fde$. 
 \end{ejem}

Properties of the Fredholm index can be used to derive sufficient conditions to obtain minimizing geodesics contained in $\fde$ or $\fga$ that join pairs in $\p_{\infty,\infty} \times \p_{\infty,\infty}$. 
We now recall the definition and some properties of the restricted Grassmannian; see for instance \cite{carey, PS, SW}. The restricted Grassmannian is given by 
 \begin{equation}\label{restricted grass}
 Gr_{res}= Gr_{res}(\h_+,\h_-):=\{  P \in \p(\h) : P - P_+ \text{ is compact}  \},
 \end{equation}
 for some fixed projections $P_{\pm}$ associated to a decomposition $\h_+ \oplus \h_-=\h$, where $\h_{\pm}$
 are both infinite dimensional subspaces. 
Recall from Remark \ref{remark para Gamma1} that given two projections $P,Q$ such that $P-Q$ is compact, then $Q|_{R(P)}:R(P) \to R(Q)$ is a Fredholm operator, with index 
 \begin{align*}
\hbox{index}(P,Q) &:=\text{index}(QP|_{R(P)}:R(P) \to R(Q))\\
 & =\dim(R(P)\cap N(Q))- \dim(R(Q) \cap N(P)).
 \end{align*}
 We observe that the sets 
 $$
 Gr_{res}^{(k)}:=\{  P \in Gr_{res} :  \hbox{index}(P_+,P)=k  \}, \,\,\,\, k \in \Z,
 $$  
 describe all the connected components of $Gr_{res}$.  
 
\begin{prop}\label{prop index rest}
The following assertions hold:
\begin{itemize}
\item[i)]   For every $k \in \Z$, $Gr_{res}^{(k)} \times Gr_{res}^{(k)} \subseteq \fde$, and there is a minimizing geodesic contained in $\fde$  joining any pair of points in $Gr_{res}^{(k)} \times Gr_{res}^{(k)}$. 
\item[ii)]  For $k\neq j$,  $Gr_{res}^{(k)} \times Gr_{res}^{(j)} \subseteq \fga$, and there is a minimizing geodesic contained in $\fga$  joining any pair of points in $Gr_{res}^{(k)} \times Gr_{res}^{(j)}$. 
\end{itemize}
\end{prop}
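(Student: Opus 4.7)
The plan uses two ingredients: compactness of differences in $Gr_{res}$, and additivity of the Fredholm index for pairs of projections. For $P\in Gr_{res}^{(k)}$ and $Q\in Gr_{res}^{(j)}$, I first observe that $P-Q=(P-P_+)-(Q-P_+)$ is compact, so by Remark \ref{remark para Gamma1} the pair $(P,Q)$ is Fredholm. Additivity of the index along the chain $P,P_+,Q$ (a classical fact for Fredholm pairs of projections, see \cite{ass}) gives
$$
\hbox{index}(P,Q)=\hbox{index}(P,P_+)+\hbox{index}(P_+,Q)=j-k.
$$
Thus $\dim R(P)\cap N(Q)$ and $\dim N(P)\cap R(Q)$ are both finite and coincide precisely when $k=j$. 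When $k=j$, the pair $(P,Q)$ lies in $\fde$ by Remark \ref{choreo}.3; when $k\ne j$, Theorem \ref{teo 62} identifies $(P,Q)$ as an element of ${\bf\Gamma}_1^{j-k}\subseteq\fga$. This establishes the inclusions claimed in (i) and (ii).

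For the minimizing geodesic statement, I would use that a minimizing geodesic in $\p(\h)\times\p(\h)$ under the product Finsler metric is a pair of minimizing geodesics in each factor. Given $(P_1,Q_1),(P_2,Q_2)$ as in (i) or (ii), the endpoints $P_1,P_2$ lie in the same component $Gr_{res}^{(k)}$, so the first paragraph yields $\dim R(P_1)\cap N(P_2)=\dim N(P_1)\cap R(P_2)$ (both finite). This is condition \eqref{existencia}, hence a minimizing geodesic $P(t)=e^{itZ}P_1e^{-itZ}$ exists with $Z=Z^*$, $P_1$-codiagonal, and $\|Z\|\le\pi/2$; the same reasoning provides $Q(t)$. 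The key step is to verify that the exponent $Z$ is compact. For this I would apply Halmos' decomposition \eqref{paul h} to $(P_1,P_2)$ and compute in the generic part $\h_0\cong\l\times\l$ the identity $(P_1-P_2)^2|_{\h_0}=\sin^2(X)\oplus\sin^2(X)$, where $X$ is the Halmos parameter. Compactness of $P_1-P_2$ then forces $\sin^2(X)$ to be compact on $\l$; since $\sin^2$ embeds $[0,\pi/2)$ homeomorphically into $[0,1)$ fixing $0$, the operator $X$ is itself compact, and the finite-rank contributions from $R(P_1)\cap N(P_2)\oplus N(P_1)\cap R(P_2)$ are harmless, so $Z$ is compact.

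With $Z$ compact, $e^{itZ}-1$ is compact, so $P(t)-P_1$ is compact and $P(t)\in Gr_{res}$ for all $t$; as $\hbox{index}(P_+,P(t))$ is integer-valued and continuous in $t$, it is constant equal to $k$, so $P(t)\in Gr_{res}^{(k)}$ throughout $[0,1]$. The analogous conclusion for $Q(t)$ gives $Q(t)\in Gr_{res}^{(j)}$ (respectively $Gr_{res}^{(k)}$). Applying the first paragraph at each $t$ will then show that the geodesic lies entirely in $\fde$ in case (i) and entirely in $\fga$ in case (ii). The principal obstacle I foresee is the compactness of the geodesic exponent $Z$, which requires the Halmos computation above; once this is in hand, the rest is routine continuity of the Fredholm index along the curve.
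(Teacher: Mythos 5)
Your proof is correct, and while its first half coincides with the paper's argument, the second half takes a genuinely different route. The inclusions $Gr_{res}^{(k)}\times Gr_{res}^{(k)}\subseteq\fde$ and $Gr_{res}^{(k)}\times Gr_{res}^{(j)}\subseteq\fga$ ($k\neq j$) are obtained in the paper exactly as you do: compactness of $P-Q$, Fredholmness of the pair, and additivity of the index along the chain $P,P_+,Q$ from \cite[Thm.~3.4]{ass}, combined with Remark \ref{choreo}.3. For the minimizing geodesics, however, the paper does not construct anything: it invokes the Hopf--Rinow theorem for the restricted Grassmannian proved in \cite{AL08}, remarks that that result is stated for the Hilbert--Schmidt ideal but ``it is not difficult to check'' that it extends to the compact ideal, and then concludes that the resulting geodesic, being a curve in $Gr_{res}^{(k)}\times Gr_{res}^{(k)}$ (resp.\ $Gr_{res}^{(k)}\times Gr_{res}^{(j)}$), stays in $\fde$ (resp.\ $\fga$) by the first part. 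You instead build the geodesic inside $\p(\h)$ directly: condition \eqref{existencia} holds because ${\rm index}(P_1,P_2)=0$ in each factor, the Halmos-model identity $(P_1-P_2)^2|_{\h_0}=\sin^2(X)\oplus\sin^2(X)$ forces the angle operator $X$, and hence the standard geodesic exponent $Z$ (a function of $X$ on the generic part plus a finite-rank rotation on the finite-dimensional intersections), to be compact, and then constancy of the integer-valued, norm-continuous index ${\rm index}(P_+,P(t))$ keeps the curve in the component $Gr_{res}^{(k)}$. What your version buys is self-containedness: it effectively supplies the verification that the paper leaves to the reader (the passage from the Hilbert--Schmidt to the compact setting, at least for the existence of minimizing geodesics between points with index zero relative difference), and it makes explicit the index-continuity argument that the paper leaves implicit. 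What the paper's citation buys is brevity and a stronger ambient fact (a full Hopf--Rinow statement for $Gr_{res}$), at the cost of an appeal to an external result outside the stated hypotheses. One small point of care in your write-up: the compactness argument applies to the exponent $Z$ produced by the standard Halmos-model construction (as in \cite{p-q}), not to an arbitrary normalized exponent joining the endpoints, since minimizing geodesics are not unique when the intersections $R(P_1)\cap N(P_2)$, $N(P_1)\cap R(P_2)$ are nontrivial; your phrasing essentially does this, but it is worth saying explicitly that you choose that particular geodesic.
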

 \begin{proof}
$i)$  Given three projections $P_1, P_2, P_3$  such that $(P_1, P_2)$ and $(P_2, P_3)$ are Fredholm pairs, and either $P_1 - P_2$ is compact or $P_2 - P_3$ is compact, then $(P_1,P_3)$ is a Fredholm pair and $\hbox{index}(P_1, P_3)=\hbox{index}(P_1, P_2) + \hbox{index}(P_2 , P_3)$ (see \cite[Thm. 3.4]{ass}). Thus, if one has two projections $P,Q \in Gr_{res}$ satisfying $\hbox{index}(P_+,P)=\hbox{index}(P_+,Q)=k$, then $\hbox{index}(P,Q)=\hbox{index}(P,P_+) + \hbox{index}(P_+, Q)=-\hbox{index}(P_+,P) + \hbox{index}(P_+, Q)=0$. That is, we obtain that $(P,Q) \in \fde$.  Take now two pairs $(P_0,Q_0), (P_1,Q_1) \in Gr_{res}^{(k)} \times Gr_{res}^{(k)}$. By the same properties of the index, we find that $\hbox{index}(P_0,P_1)=\hbox{index}(Q_0,Q_1)=0$. It was shown in \cite{AL08} that there exists a minimizing  geodesic joining every pair of points in the same connected of the restricted Grassmannian.  
Actually this results was proved for the  restricted Grassmannian relative to the ideal of Hilbert-Schmidt operators; but it is not difficult to check that the same holds for the above defined restricted Grassmannian relative to the ideal of compact operators. Thus, there is a minimizing geodesic $\delta(t)=(P_t,Q_t) \in \fde$, $t \in [0,1]$, joining the points $(P_0,Q_0)$ and $(P_1, Q_1)$. 

\medskip

\noi $ii)$ Similarly as in the previous item, we see that $\hbox{index}(P,Q)\neq 0$, whenever $\hbox{index}(P_+,P)=k$ and $\hbox{index}(P_+,Q)=j$ ($k\neq j$). Also note that $P - PQP=P(P-Q)P=P(P-P_+)P + P(P_+ -Q)P$ is compact since we assume that $P,Q \in Gr_{res}$. Hence $(P,Q)\in \fga$. Again the same argument as in the previous item shows that there is a minimizing geodesic $\delta(t)=(P_t,Q_t) \in \fga$, $t \in [0,1]$, joining any pairs of points $(P_0,Q_0)$ and $(P_1, Q_1)$ with $(P_0,Q_0), (P_1, Q_1) \in Gr_{res}^{(k)} \times Gr_{res}^{(j)} $. 
\end{proof}

\section{Examples in function spaces}\label{examples section}

 In  previous works \cite{AC19, grassH2, ACV21} we studied the condition for the existence of minimal geodesics joining two subspaces in certain functional spaces. In this section we discuss if those subspaces and other related subspaces satisfy the weaker condition of having a common complement.

 Let $L^2$ and $L^\infty$ be the spaces of square-integrable  and essentially bounded complex valued functions with respect to the Lebesgue measure on   the unit circle $\mathbb{T}=\{ z \in \CC : |z|=1 \}$. 
For  $p=2,\infty$, we consider the corresponding Hardy spaces
 $$
H^p =   \{f\in L^p: \hat{f}(n)=0 \hbox{ for } n<0\}.
$$
Here $\{ \hat{f}(n) \}_{n \in \Z}$ are the Fourier coefficients of  $f \in L^2$. These are closed subspaces of $L^p$.  As usual we identify $H^2$
with the space $H^2(\D)$ consisting of all the functions $f$ analytic in the unit disk $\D$ satisfying 
$$
\| f \|_{H^2(\D)}:=\left(\sup_{0 \leq r<1} \frac{1}{2\pi} \int_0 ^{2\pi} |f(r e^{it})|^2 dt \right)^{1/2} < \infty.
$$
Given  $f \in H^2(\D)$, one has that $\lim_{r \to 1^-} f(r e^{it})=f^*(e^{it})$ exists for the $L^2$ norm, and $\| f^*\|_{L^2}=\| f \|_{H^2(\D)}$. Analogously the subspace $H^\infty$ is identified with the space $H^\infty(\D)$  consisting of bounded analytic functions on $\D$ endowed with the norm $\|f \|_{H^\infty(\D)}=\sup_{z \in \D}|f(z)|$; though in this case the  limit defining $f^*$ has to be taken in the weak* - topology.  These    classical results and others mentioned without references in this section can be found for instance in the  books    \cite{BS,  douglas, nikolski}.

Our first set of  examples concerns subspaces of the form $f H^2$, $f \in L^\infty$. These are  closed subspaces of $L^2$ if and only if $f \in (L^\infty)^\times$, where $(L^\infty)^\times$ are the invertible functions in the algebra $L^\infty$ (see \cite[Lemma 3.1]{grassH2}). 
In such case, $f H^2$ is a (single) shift-invariant subspace. Conversely, the Beurling-Helson theorem gives that 
a closed (single) shift-invariant subspace must be of the form $f H^2$, for some $f \in L^\infty$, $|f|=1$ a.e. 
 We consider the Riesz projection 
 $$
 P_+:L^2 \to H^2, \, \, \, \, P_+\left(\sum_{n \in \Z} \hat{f}(n) e^{int}\right)=\sum_{n \geq 0} \hat{f}(n) e^{int}.
 $$
We recall that the Toeplitz operator $T_f:H^2 \to H^2$ with symbol $f\in L^\infty$ is defined by $T_f h=P_+(fh)$, 
for $h \in H^2$.

\begin{ejem}[\textbf{Sarason algebra and  restricted Grassmannian  of the Hardy space}]\label{Grassm restringida}
Let $C$ denote the space of continuous complex valued functions on $\mathbb{T}$. The \textit{Sarason algebra} is defined as 
$$
H^\infty + C=\{  f + g  :  f \in H^\infty, \, g \in C \}.
$$
It is the smallest closed subalgebra of $L^\infty$ that properly contains $H^\infty$. We write $(H^\infty + C)^\times$ for the invertible functions of the algebra $H^\infty + C$, which has a characterization in terms of the harmonic extension. Recall that the harmonic extension to the unit disk of a function $f \in L^\infty$ is given by
$$
(hf)(r e^{it})=\frac{1}{2\pi}\int_0^{2\pi} k_r(t-s)f(e^{is}) ds=  \sum_{n \in \Z} \hat{f}(n) r^{|n|} e^{int}, \, \, \, \, 0 \leq r <1, \, \, \, 0 \leq t < 2 \pi ,
$$
where $k_r(t)=(1-r^2)/(1 -2r \cos(t) + r^2)$  is the Poisson kernel.
   Then,  one has that $f \in (H^\infty + C)^\times$  if and only if there exist $\delta, \epsilon >0$ such that $|(hf)(re^{it})|\geq \epsilon$ for 
$1-\delta < r <1$. For $f \in (H^\infty + C)^\times$, one can define an index $ind(f)$ as minus the winding number with respect to the origin of the curve $(hf)(re^{it})$ for $1-\delta <r <1$. This index is stable under small perturbations and it is an homomorphism of  $(H^\infty + C)^\times$ onto the group of integers. 

In this example, we take $L^2$ as the underlying Hilbert space, and  subspaces of the form 
\begin{equation*}
\s=f H^2, \, \, \, \, \, \t=g H^2, \, \, \, \, \, f, g \in (H^\infty + C)^\times .
\end{equation*}  
Let $Gr_{res}$ the corresponding restricted Grassmannian associated with the Riesz projection $P_+$ (see \eqref{restricted grass}).  According to \cite{grassH2}, for $f$ an invertible function in $L^\infty$, we have  $f \in (H^\infty+ C)^\times$ if and only if $P_{f  H^2} \in Gr_{res}^{(k)}$, where $k=-ind(f)$. Applying Proposition \ref{prop index rest} we obtain that the subspaces $f H^2$, $g H^2$ admit a common complement  if and only if $ind(f)=ind(g)$. 
Notice that $1 - G^*G|_{N(G)^\perp}$ is always compact since $P_\s-P_+$ and $P_\t - P_+$ are compact operators for $P_\s, P_\t \in Gr_{res}$. In the case in which $ind(f)\neq ind(g)$, we observe that $(P_{f H^2}, P_{g H^2}) \in \fga_1^n$, where $n=ind(g)-ind(g)$.
\end{ejem}

\begin{rem} We observe some relevant cases contained in the previous example, and we recall several ways to compute the index, which can be useful to construct concrete examples.

\medskip

\noi $i)$ Let $(H^\infty)^\times$ and $C^\times$ be the invertible functions in $H^\infty$ and $C$, respectively. 
Of course,  $C^\times$ are the non-vanishing continuous functions on $\mathbb{T}$; meanwhile,  for $f \in H^\infty$, one has $f \in (H^\infty)^\times$ if and only if   $|(hf)(z)|\geq \epsilon >0$, for all $z \in \D$. 
Since $(H^\infty)^\times \subset (H^\infty + C)^\times$ and  $C^\times \subset (H^\infty + C)^\times$, we can give as particular examples subspaces of the form $fH^2$, where $f \in (H^\infty)^\times$ or $f \in C^\times$.  

At this point, it can be of interest to recall another characterization of the  invertible functions  in the  Sarason algebra. For $f \in H^\infty + C$, one has that $f \in   (H^\infty + C)^\times$
if and only if $f=f_1 f_2$, for some functions $f_1\in (H^\infty)^\times$ and $f_2 \in C^\times$ (see \cite[Section 2.5.3]{nikolski}).

\medskip

\noi $ii)$ The index of a non-vanishing differentiable function $f$ on $\mathbb{T}$ is given by 
$$
ind(f)=\frac{1}{2\pi} \int_0^{2\pi} \frac{f'(e^{it})}{f(e^{it})}e^{it} dt.
$$
When $f$ is $C^1$ and $|f(e^{it})|=1$, $0 \leq t < 2 \pi$, it can be rewritten as 
$$
ind(f)=\sum_{n \in \Z} n |\hat{f}(n)|^2.
$$
In the case of a rational function $f$ which does not vanish on $\mathbb{T}$, $ind(f)=z-p$, where $z$ and $p$ are the number of zeros and poles of $f$ in $\D$ counted with multiplicities.
\end{rem}

We now recall the inner-outer factorization. A function $f \in H^2$ is called \textit{inner} if $|f(e^{it})|=1$ a.e. on $\mathbb{T}$. A function $f \in H^2$ is \textit{outer} if $\overline{\mathrm{span}}\{f \chi_n : n \geq 0 \}=H^2$, where $\chi_n$ are the functions on $\T$ defined by  $\chi_n(e^{it})=e^{int}$ ($n \in \Z$). Every $f \in H^2$ can be expressed as  $f=f_{inn} f_{out}$, where $f_{inn}$ is an inner function and $f_{out}$ is an outer function. This factorization is unique up to a multiplicative constant. 

An inner function $f$ can  also be factorized uniquely as $f=\lambda b s$, where $\lambda \in \T$, $b$ is a Blaschke product  and $s$ is a singular inner function. This is known as the \textit{canonical factorization}. We recall that a \textit{Blaschke product} is an inner function $b$ with zeros $\{ a_j\}_1^n $ in $\D$ ($1 \leq n \leq \infty$) defined by
$$
b(z)=\prod_{j=1}^n \frac{\bar{a}_j}{|a_j|}\frac{a_j-z}{1-\bar{a}_j z}, \, \, \, \, z \in \D.
$$
For $a_j=0$ we interpret $\bar{a}_j/|a_j|=-1$.  
In the case where $n=\infty$, the sequence of zeros $\{  a_j \}_{j\geq 1}$ satisfies the Blaschke condition $\sum_{j \geq 1} 1 - |a_j|<\infty$, and  the product converges uniformly on compact subsets of $\D$. 
 On the other hand, a \textit{singular inner function} $s=s_\mu$ is defined by
 $$
 s(z)=\exp \left( -\int_\T \frac{\psi + z}{\psi -z}  d \mu(\psi) \right), \, \, \, z \in \D,
 $$
for $\mu$ a positive finite measure on $\T$ that is singular with respect to the Lebesgue measure. 
Thus, for $f$ an inner function, $f=\lambda b s$, $b$ is the  Blaschke product is associated with the zero set of $f$  and $s > 0$ on $\D$. 

\begin{ejem}[\textbf{Inner functions with distinct supports}]\label{inner distinct}
The \textit{support} of an inner function $f$ denoted by $supp(f)$ is the set of all points in $\T$ that are limits of zeros of $f$ or belong to the  support of $\mu$ (here $s=s_\mu$). In this example, we consider $\h=L^2$, $\s=f H^2$ and $\t=g H^2$, for $f$, $g$ inner functions with distinct support. Thus, there is point $z_0 \in supp(f)\setminus supp(g)$ or  $z_0 \in supp(g)\setminus supp(f)$.  We may assume that the first of these conditions holds true for our purpose  of determining if $fH^2$ and $gH^2$ have a common complement. Then from the proof of \cite[Thm. 1]{SL71} we know that $T_{f\bar {g}}$ is not left invertible in $H^2$, which is equivalent to say that $T_{f \bar{g}}$ is not injective or $R(T_{f\bar{g}})$ is not closed. Below we consider these two cases.

 In the first case,  $T_{f \bar{g}}$ is not injective, so by Coburn's lemma we know that $T_{f\bar{g}}^*=T_{ \bar{f}g}$ is  injective. Hence $\dim fH^2 \cap (gH^2)^\perp = \dim N(T_{f \bar{g}}) > 0 $ and $\dim gH^2 \cap (fH^2)^\perp =\dim N(T_{g\bar{f}})=0$. 
 Then, we have to discuss   whether $1- G^*G|_{N(G)^\perp}$ is compact or not, where we take the operator $G$ as  $G:=P_\s |_{\t}: \t \to \s$. 
Note that $P_\s=M_f P_+ M_{\bar{f}}$ and $P_\t=M_g P_+ M_{\bar{g}}$. Here $M_h$ is the multiplication operator  by a function $h \in L^\infty$ acting on $L^2$.  
Then 
$$
G^*G=P_\t P_\s P_\t|_\t= M_g P_+ M_{ f\bar{g} } P_+ M_{g \bar{f}} P_+ M_{\bar{g}}|_{gH^2},
$$
and
$$
N(G)=N(G^*G)=M_g N(P_+ M_{f \bar{g}} P_+ M_{g\bar{f}}P_+).
$$
The unitary operator $M_{\bar{g}}=M_g^*$ maps $\t$ onto $H^2$.  Note also that $M_{\bar{g}}$ maps $N(G)$ onto 
$$
N(P_+ M_{f\bar{g}} P_+ M_{g\bar{f} }P_+)=N(P_+M_{g\bar{f}}P_+).
$$
Then instead of showing that $1-G^*G\big|_{N(G)^\perp}$ is compact, we can show, equivalently, that 
$$
1-P_+ M_{f\bar{g}} P_+ M_{g\bar{f}}P_+\big|_{N(P_+M_{g\bar{f}}P_+)^\perp}
$$
is compact. Note that  $P_+M_{g \bar{f}}P_+$ on $H^2$ coincides with the Toeplitz operator $T_{g\bar{f}}$.
Since  we have assumed that $N(T_{ \bar{f}g})=\{ 0 \}$,  our task now reduces to analyze if the operator
\begin{equation}\label{semicomm}
1-T_{\bar{f}g}^*T_{\bar{f}g}
\end{equation}
is compact in $H^2$. To this end, we need to recall here some facts.  The operator $T_v T_u - T_{vu}$ is called the semicommutator of  the Toeplitz operators $T_u$ and $T_v$ for $u,v \in L^\infty$. The Hankel operator with symbol $u \in L^\infty$ is the bounded linear operator $H_u:H^2 \to H^2_-$ defined by $H_u(f)=P_-(uf)$, where $f \in H^2$ and $P_-$ is the orthogonal projection onto $H^2_-:=(H^2)^\perp$. The following well-known formula relates semicommutators and Hankel operators: 
$$
T_{uv} - T_u T_v=H_{\bar{u}}^* H_v
$$  
Recall also Hartman's theorem, which states that $H_u$ is compact if and only $u \in H^\infty + C$.  
Applying these results with $u=f\bar{g}$, $v=\bar{f} g$, we find that the operator in \eqref{semicomm} is compact if and only if $H_{\bar{f}g}^*H_{\bar{f}g}$ is compact.  This is equivalent to say that $H_{\bar{f}g}$ is compact, or by Hartman's theorem, that $\bar{f}g \in H^\infty + C$. 

We now show that this is a contradiction with our assumption on the supports of $f$ and $g$. Indeed, for $z_0 \in supp(f)$ there is a sequence $\{ z_n \}_{n \geq 1}$ in $\D$ such that  
$z_n \to z_0$ and $(hf)(z_n) \to 0$ (\cite[Lemma 2]{SL71}).  On the other hand, if $z_0 \notin supp(g)$, then $|(hg)(z_n)|\to 1$. Notice that the harmonic extensions $hf$ and $hg$ are the analytic extensions because $f$, $g$ are inner. Take $a \in H^\infty$, $b \in C$ and $\epsilon >0$. Recall that the harmonic extension is asymptotically multiplicative on $H^\infty + C$, so there exists a compact set $K \subset \D$ such that $\| (hf(a+b))(z) - (hf)(z)\, (ha)(z) - (h f)(z)\, (hb)(z)\|  < \epsilon$, for all $z \in \D \setminus K$. Then, we have
\begin{align*}
\| \bar{f} g - (a+b) \|_{L^\infty(\mathbb{T})} & = \|  g - f(a+b) \|_{L^\infty(\mathbb{T})}  \\ 
&  \geq \|   hg - hf(a+b) \|_{L^\infty(\D)}  \geq \|   hg - hf(a+b) \|_{L^\infty(\D\setminus K)}  \\
& \geq \| hg  - (hf) (ha) - (h f) (hb)\|_{L^\infty(\D \setminus K)} - \\ & - \| hf(a+b) - (hf) (ha) - (h f) (hb)\|_{L^\infty(\D \setminus K)} \\
& \geq 1- \epsilon.
\end{align*}
We have used that $|(hg)(z_n)| \to 1$, $(hf)(z_n)(ha)(z_n) \to 0$ since $ha$ is bounded for $a \in H^\infty$ and $ (h f)(z_n) (hb)(z_n) \to 0$ since $hb$ is continuous on $\overline{\D}$ for $b \in C$. The above arguments imply 
$d(\bar{f}g, H^\infty + C)=1$, and in particular, we obtain $\bar{f}g \notin H^\infty +C$. Hence the operator  $1-G^*G\big|_{N(G)^\perp}$  is not compact.

In the second case, we suppose that $T_{f \bar{g}}$ is injective and $R(T_{f\bar{g}})$ is not closed. We change the definition of $G$ here by setting   $G:=P_\t |_{\s}: \s \to \t$. Similar arguments as in the previous case show that, under the assumption $N(T_{f \bar{g}})=\{ 0 \}$, we have the operator $1- G^*G|_{N(G)^\perp}$ is compact if and only if $1 - T_{\bar{g}f}^*T_{\bar{g}f}$ is compact in $H^2$. But $R(T_{f\bar{g}})$ is not closed, and consequently,  neither $R(|T_{f\bar{g}}|^2)$ is  closed, so that $0$ is not an isolated point of its spectrum. Hence $1$ is not an isolated point of the spectrum of $1 - T_{\bar{g}f}^*T_{\bar{g}f}$, which clearly implies that this operator cannot be compact. 

From the above two cases, we conclude that $fH^2$ and $g H^2$ have a common complement, whenever $f$ and $g$ are inner functions with distinct support. 
\end{ejem}

\begin{rem}
\noi $i)$ The pair of subspaces considered in Example \ref{Grassm restringida} differ from that of Example \ref{inner distinct}. Indeed, if $fH^2$ and $gH^2$ are subspaces such that $f$, $g$ inner functions with $supp(f)\neq supp(g)$,  
then we must have that  $supp(f) \neq \emptyset$ or $supp(g)\neq \emptyset$. But the unique inner functions invertible in $H^\infty + C$ are the finite Blaschke products (see \cite[Sec. 2.5.3]{nikolski}). 

\medskip

\noi $ii)$ We observe that under the stronger assumption $supp(f)\setminus supp(g)\neq \emptyset$ and $supp(g)\setminus supp(f)\neq \emptyset$, it can be shown that $N(T_{\bar{f}g})\simeq gH^2 \cap (fH^2)^\perp=\{ 0 \}$ and $N(T_{f\bar{g}})\simeq fH^2 \cap (gH^2)^\perp=\{ 0 \}$ (see \cite[Thm. 1-2]{SL71} and \cite[Ex. 5.6]{grassH2}). However, one can give examples of inner functions $f$, $g$ with $supp(g)\subset supp(f)$ and $N(T_{\bar{f}g})\neq \{ 0 \}$. As a simple example, take $h$ a Blaschke product with infinite zeros converging to  $z_0 \in \T$,  $g$ another Blaschke product with infinite zeros converging to $z_1 \in \T$, $z_0 \neq z_1$ and take $f=g h$. Clearly, we have $N(T_{\bar{f}g})=N(T_{\bar{h}})\neq \{ 0 \}$. 
\end{rem}

\begin{ejem}[\textbf{Inner functions with same supports}]
We take $\h=L^2$, $\s=fH^2$ and $gH^2$ for $f$, $g$ inner functions with the same support. Furthermore, we consider only the case $supp(f)=supp(g)=\{ 1 \}$. 
This situation is illustrated by the following examples. 

For the first type of examples we consider $f$ a singular inner function and $g$ a Blaschke product. More precisely, let $f_a(z)=\exp(a(z+1)/(z-1)) $, $a > 0$. A Koosis function is a Blaschke product $g$ such that $f_a \bar{g} \in H^\infty + C$ for all $a>0$.
A Koosis function is characterized in terms of the sequence of zeros $\{ a_k \}_{k \geq 1}$ of the Blaschke product $g$. Indeed, let $\lambda_k=i (1+ a_k)/(1-a_k)$, $k \geq 1$, which defines a sequence in the  upper-half plane. Then,  $g$ is a Koosis function if and only if ${\rm Im}(\lambda_k) \to \infty$ and 
$$
\lim_{|x| \to \infty} \sum_k \frac{{\rm Im}(\lambda_k)}{|x-\lambda_k|^2}=0, \, \, \, \, x \in \RR.
$$ 
 In particular, $g$ is a Koosis function if it has real zeros converging to $1$. 
It was proved that $N(T_{\bar{f}_a g})\simeq gH^2 \cap (f_a H^2)^\perp$  is infinite dimensional and $R(T_{\bar{f}_a g})=L^2$  for every Koosis function $g$ and $a >0$ (see \cite[Thm. 4]{SL71}). Thus, we have $\dim  gH^2 \cap (f_a H^2)^\perp=\infty $ and $\dim  f_aH^2 \cap (g H^2)^\perp=0$. We can repeat the argument of Example \ref{inner distinct}, where now $N(T_{\bar{f}_a g}) =\{ 0 \}$ and $G=P_{gH^2}|_{f_aH^2}:f_a H^2 \to g H^2$, to find that the operator $1-G^*G|_{N(G)^\perp}$ must be compact because $f_a \bar{g} \in H^\infty + C$ by definition of a Koosis function. Hence $f_a H^2$ and $g H^2$ do not admit a common complement, and $(P_{f_a H^2}, P_{gH^2}) \in \fga^r_\infty$.

The second type of examples concerns two infinite Blaschke products converging to $1$. It was mentioned in \cite{GS81} that there exist  infinite Blaschke products $f$, $g$ such that $f/g \in (H^\infty + C)^\times$ (i.e. $f$, $g$ are codivisible in $H^\infty + C$). This can be constructed by taking two infinite Blaschke products with zeros converging to one point in $\T$, where one of the zeros is a suitable perturbation of the other. Using this idea for studying examples of geodesics in the Grassmann manifold, we proved in \cite[Thm. 5.5]{ACV21} that for every integer $n\geq 0$ there exist two disjoint sequences $\{ a_k\}_{k \geq 1}$ and $\{ b_k \}_{k \geq 1}$ in $\D$ satisfying Blaschke condition, $a_k \to 1$, $b_k \to 1$, and their corresponding Blaschke products $f$ and $g$ are such that $ \dim (fH^2)^\perp  \cap gH^2=0$ and $\dim fH^2 \cap (gH^2)^\perp=n$. Furthermore, $f$ and $g$ are codivisible in $H^\infty + C$, so again by the  previous arguments we have that $1-G^*G|_{N(G)^\perp}$ is compact. Hence there is a common complement for $fH^2$ and $gH^2$ if and only $n=0$. In the case $n \neq 0$, we observe that $(P_{fH^2}, P_{gH^2})\in \fga_1^n$. 
\end{ejem}


\begin{ejem}[\textbf{Functions supported on arcs  and odd functions}]
Let again $\h=L^2=L^2(\mathbb{T})$, let $\j\subset \mathbb{T}$ be the  arc $\j=\{e^{it}: t\in[0,T]\}$, and consider the subspace
$$
\s=\s_\j=\{f\in L^2: \sup( f) \subset \j\}.
$$ 
Here $\sup(f)$ denotes the (essential) support of $f \in L^2$.
We also consider the  subspace
$$
\t=\t_{odd} =\{g\in L^2(\mathbb{T}): \hat{g}(m)=0 \hbox{ for } m \hbox{ even}\}.
$$
Notice that $\s$ and $\t$ are both infinite dimensional subspaces. The following facts follow from straightforward computations.
\begin{itemize}
\item[1.] If $T=\pi$, then $\s_\j$ and $\t_{odd}$ are in generic position, in particular, $\s_\j\cap\t_{odd}^\perp=\{0\}=\s_\j^\perp\cap\t_{odd}$, and therefore $\s_\j$ and $\t_{odd}$ have a common complement.
\item[2.] If $T<\pi$, then $\s_\j\cap\t_{odd}^\perp=\{0\}$. Set $\j_T=\{  e^{it} : -\pi \leq t \leq -T \hbox{ or }    T \leq t \leq \pi \}$, then
$$
\dim \s_\j^\perp\cap\t_{odd}=\dim \{  f \in L^2 : \hat{f}(m)=0  \hbox{ for } m \hbox{ even}, \, \sup(f)\subset \j_T  \}  =\infty.
$$
\end{itemize}
Nevertheless in the second case above $\s_\j$ and $\t_{odd}$ have a common complement. Indeed, note that
$$
1-P_{\s_\j}P_{\t_{odd}}P_{\s_\j}\big|_{\s_\j}=P_{\s_\j} - P_{\s_\j}P_{\t_{odd}}P_{\s_\j}\big|_{L^2(\j)}=P_{\s_\j}(1-P_{\t_{odd}})\big|_{L^2(\j)}.
$$
Next observe that $(1-P_{\t_{odd}})f(e^{it})=\frac12(f(e^{it})+f(-e^{it}))$. If $h\in L^2$ is supported in $\j$, then $h(-e^{it})=0$ a.e.  for $t\in[0,T]$, and one has  that
$$
(1-P_{\s_\j}P_{\t_{odd}}P_{\s_\j})h=\frac12 h,
$$
which clearly implies that $1-P_{\s_\j}P_{\t_{odd}}P_{\s_\j}\big|_{\s_\j}$ is non compact.
\end{ejem}

 

The Hardy space $H^2$ is  a reproducing kernel Hilbert space,  where the reproducing kernels are given by the Szego kernels
$$
k_b(z)=\frac{1}{1- \bar{b}z}, \, \, \, z,b \in \mathbb{D}.
$$
For our next example we recall that that a sequence $\{ b_j \}_{j \geq 1}$ in $\D$ is said to be an \textit{interpolating sequence} if there exists  $\delta>0$  such that 
\begin{equation}\label{condicion C}
 \delta < \prod_{\substack{j\ge 1 \\ j\ne i}}\Big|\frac{b_i-b_j}{1-\bar{b}_jb_i}\Big| \ , \  \ i=1,2,\dots.
 \end{equation}
 Let $\Pi_\BB$ denote the Blaschke product associated to a sequence $\BB=\{ b_j \}_{j \geq 1}$ satisfying   Blaschke condition.  Carleson proved the following result: the  normalized Szego kernels   form a Riesz basis $\{  \frac{k_{b_i}}{\| k_{b_i} \|} \}_{i\geq 1}$ of the model space $\k_{\Pi_\BB}=H^2 \ominus \Pi_\BB H^2$ if and only $\BB$ is an interpolating sequence.

\begin{ejem}[\textbf{Blaschke products and odd functions}]
Let now $\h=H^2$, and $\BB=\{ b_j \}_{j \geq 1}$ satisfying  the  Blaschke condition. 
Consider the subspaces $\t=\t_{odd}$ of odd functions in $H^2$, and 
$$
\s=\s_\BB=\{ f\in H^2: f(b_j)=0 \hbox{ for } j\ge 1\}.
$$
We give two type of examples in this setting.

In the first case, suppose additionally that $\BB\cap(-\BB)=\emptyset$ and that the (disjoint) union $\BB\cup(-\BB)$ is an interpolating sequence. In what follows we use that $\| k_b \|=(1 - |b|^2 )^{-1/2}$.
 Suppose that $h\in\s_\BB^\perp\cap\t_{odd}=\k_{\Pi_\BB} \cap \t_{odd}$, then 
$h=\sum_{j\ge 1} \gamma_j (1-|b_j|^2)^{1/2}k_{b_j}$ is odd. Since $k_b(-z)=k_{-b}(z)$, we get
$$
\sum_{j\ge 1} \gamma_j (1-|b_j|^2)^{1/2}k_{b_j}(z)=h(z)=-h(-z)=-\sum_{j\ge 1} \gamma_j(1-|b_j|^2)^{1/2}k_{-b_j}(z),
$$
Using that $\BB\cap(-\BB)=\emptyset$, note
$$
0=\sum_{k\ge 1} \delta_k (1-|c_k|^2)^{1/2}k_{c_k}, \hbox{ where } \ \left\{\begin{array}{l} \delta_k=\gamma_j \hbox{ for  } c_k=b_j \, ,\\ \delta_k=\gamma_i \hbox{ for } c_k=-b_i . \end{array} \right.
$$
Since $\{ (1-|c_k|^2)^{1/2}k_{c_k}\}_{k \geq 1}$ form a Riesz basis, it follows that $c_k=0$ for all $k$. Hence $\s_\BB^\perp\cap\t_{odd}=\{0\}$.

Next observe that $\t_{odd}^\perp=\t_{even}$ consists of even function in $H^2$.  Apparently, if $f\in\t_{even}$  and $f(b)=0$, then also $f(-b)=0$. Therefore if $f\in\s_\BB\cap \t_{odd}^\perp$, the products
$$
\frac{b_j-z}{1-\bar{b}_jz}\frac{b_j+z}{1+\bar{b}_jz}=\frac{b^2_j-z^2}{1-\bar{b}^2_jz^2}
$$
divide $f$. Note that these products are even functions. It follows that 
$$
\s_\BB\cap\t_{odd}^\perp=\{g\ \Pi_{\BB\cup -\BB}: g\in H^2 \hbox{ is even}\},
$$
which is an infinite-dimensional subspace. Now take $G:=P_{\s_\BB}P_{\t_{odd}}: \t_{odd} \to \s_\BB$, which has trivial nullspace. Note also that $1- G^*G= 1-P_{\t_{odd}}P_{\s_\BB}P_{\t_{odd}}\big|_{\t_{odd}}=P_{\t_{odd}}P_{\s_\BB}^\perp P_{\t_{odd}}\big|_{\t_{odd}}$.
 Recall that if $u$ is an inner function, then the projection $P_u:L^2 \to \k_u\subset L^2$ is given by
$$
P_uf=f-uP_+(\bar{u}f),
$$
where $P_+$ is the Riesz projection (see  \cite[Prop. 5.14]{garcia ross libro}).

We claim that $P_{\t_{odd}}P_{\s_\BB}^\perp P_{\t_{odd}}\big|_{\t_{odd}}$ is non compact, which would imply that $\s_\BB$ and $\t_{odd}$ have a common complement. 
Clearly $P_{\t_{odd}}P_{\s_\BB}^\perp P_{\t_{odd}}\big|_{\t_{odd}}$ is compact if and only if $P_{\t_{odd}}P_{\s_\BB}^\perp$ is compact in $H^2$. This operator is compact
if and only if its natural extension to $L^2$, namely 
$P_{odd}P_{\Pi_\BB}$ is compact. Here
$P_{odd}f(z)=\frac12(f(z)-f(-z))$ is the orthogonal projection onto odd elements in $L^2$. Indeed, 
$$
P_{odd}P_{\Pi_\BB}=\left\{ \begin{array}{ll} P_{\t_{odd}}P_{\s_\BB}^\perp & \hbox{ in } H^2, \\  0 &\hbox{ in } (H^2)^\perp . \end{array} \right. 
$$
Clearly $P_{odd}P_{\Pi_\BB}$ is compact if and only if $P_{odd}P_{\Pi_\BB}M_{\Pi_\BB}$ is compact, where $M_{\Pi_\BB}$ denotes the (unitary) multiplication operator by the unimodular element $\Pi_\BB$. Note that 
$$
P_{odd}P_{\Pi_\BB}M_{\Pi_\BB}f=P_{odd}(\Pi_\BB f-\Pi_\BB P^+f)=P_{odd} M_{\Pi_\BB}P_-f,
$$
where $P_-$ is the orthogonal projection onto $(H^2)^\perp$. This operator is clearly non compact: consider for instance $g_n=\bar{\Pi}_\BB z^{2n-1}$ for integers $n\le 0$. These elements are orthonormal and belong to $(H^2)^\perp$, and clearly
$$
P_{odd} M_{\Pi_\BB}P_-g_n=z^{2n-1}.
$$

\medskip

In the second case, we consider a variation of the above example. Put $\BB_0$ an interpolating sequence which is symmetric with respect to the origin, i.e.  $\BB_0=\BB\cup-\BB$ for some  $\BB=\{ b_j \}_{j\geq 1} \subseteq \mathbb{D}$. Then,  we have again
$$
\s_{\BB_0}\cap\t_{odd}^\perp=\{g\ \Pi_{\BB_0}: g\in H^2 \hbox{ is even}\}.
$$
But now $\s_{\BB_0}^\perp\cap\t_{odd}$ consists of odd functions of the form $h=\sum_{j\ge 1} \gamma_k (1-|b_k|^2)^{1/2}k_{b_k}+\sum_{j\ge 1} \delta_j(1-|b_j|^2)^{1/2}k_{-b_j}$. Since
  $\{ \frac{ k_{b_j} }{ \|b_j \| } \}_{j \geq 1} \cup \{ \frac{k_{-b_j} }{ \|k_{-b_j} \| }\}_{j\geq 1}$ 
  forms a Riesz basis of $\k_{\BB_0}$, this means that
$$
\gamma_k=-\delta_k, \ k\ge 1.
$$
This clearly defines an infinite dimensional subspace of $H^2$. 
Therefore, here $\s_{\BB_0}\cap\t_{odd}^\perp$ and $\s_{\BB_0}^\perp\cap\t_{odd}$ are infinite dimensional, and hence $\s_{\BB_0}$ and $\t_{odd}$ have a common complement.
\end{ejem}

The following example concerns well-known facts on the Uncertainty Principle in Harmonic Analysis (see for instance \cite{lenard} or the survey article \cite{folland}). We also refer to \cite{AC19} for the relation with the geometry of the Grassmann manifold. 

\begin{ejem}[\textbf{Measurable sets and Fourier-Plancharel transform}]\label{incertidumbre}
Let $I, J\subset\mathbb{R}^n$ be measurable sets with finite and positive Lebesgue measure. Consider $\h=L^2(\mathbb{R}^n)$ with Lebesgue measure and the projections $P_I$ onto the elements of $L^2(\mathbb{R}^n)$ supported in $I$ and $Q_J$ onto the elements whose Fourier-Plancherel transform is supported in $J$. The following facts are known:
\begin{itemize}
\item $R(P_I)\cap R(Q_J)=R(P_I)\cap N(Q_J)=N(P_I)\cap R(Q_J)=\{0\}$ and $
N(P_I)\cap N(Q_J)$ is infinite dimensional (\cite{lenard}).
\item $P_IQ_JP_I$ is compact, in fact, nuclear (\cite{folland}). 
\end{itemize}
Therefore we have the following:
\begin{itemize}
\item[1.] $\s_I=\{f\in L^2(\mathbb{R}^n): \sup(f)\subset I\}$ and $\t_J=\{g\in L^2(\mathbb{R}^n): \sup(\hat{g})\subset J\}$ have a common complement since it is  straightforward to check that  $\s_I\cap\t_J^\perp=\{0\}=\s_I^\perp\cap\t_J$.
\item[2.] $\s_I$ and $\t_J^\perp$ do not have a common complement.
The role of $\t$ is reversed: now $\s_I\cap(\t_J^\perp)^\perp=R(P_I)\cap R(Q_J)=\{0\}$, but $\s_I^\perp\cap\t_J^\perp$ is infinite dimensional. Moreover
$$
1-P_{\s_I}\big|_{\t_J^\perp}=P_I-P_IQ_J^\perp P_I=P_IQ_JP_I
$$
is compact. Hence $(P_{\s_I},P_{\t_J^\perp}) \in \fga_\infty^l$.
\end{itemize}
\end{ejem}



\medskip

\noi 
\textbf{Acknowledgment.}
This work was supported by the grant PICT 2019 04060 (FONCyT - ANPCyT, Argentina), (PIP 2021/2023 11220200103209CO),  ANPCyT (2015 1505/ 2017 0883) and FCE-UNLP (11X974).

\noi E. Andruchow, {\sc  {Instituto Argentino de Matem\'atica, `Alberto P. Calder\'on', CONICET, Saavedra 15 3er. piso,
(1083) Buenos Aires, Argentina }} and {\sc Universidad Nacional de General Sarmiento, J.M. Gutierrez 1150, (1613) Los Polvorines, Argentina}

\noi  e-mail: eandruch@campus.ungs.edu.ar

\medskip

\noi E. Chiumiento, {\sc  {Instituto Argentino de Matem\'atica, `Alberto P. Calder\'on', CONICET, Saavedra 15 3er. piso,
}(1083) Buenos Aires, Argentina}  and {\sc Departamento de Matem\'aica y Centro de Matem\'atica La Plata, Universidad Nacional de La Plata, Calle 50 y 115 (1900) La Plata, Argentina}

\noi  e-mail: eduardo@mate.unlp.edu.ar

\end{document}